\newcommand{\D}{{\mathbb{D}}}
\newcommand{\X}{{\mathbb{R}^d}}
\newcommand{\Y}{{\mathbb{R}^d}}
\newcommand{\Leb}{{\mathcal{L}}}
\theoremstyle{plain}
\newtheorem{thm}{Theorem}[section]
\newtheorem{claim}[thm]{Claim}
\newtheorem{cor}[thm]{Corollary}
\newtheorem{defn}[thm]{Definition}
\newtheorem{ex}[thm]{Example}
\newtheorem{lem}[thm]{Lemma}
\newtheorem{propn}[thm]{Proposition}
\newtheorem{rmk}[thm]{Remark}
\begin{document}

\title {Optimal Transportation with Capacity Constraints}
\date{\today}

\author{Jonathan Korman}
\email{jkorman@math.toronto.edu}

\author{Robert J. McCann}
\thanks{The second author is pleased to acknowledge the support of Natural Sciences and Engineering Research Council of Canada Grants 217006-08. \copyright 2011 by the authors. \\
{\it 2010 Mathematics Subject Classification.} Primary 90B06; secondary 35R35, 49Q20, 58E17.\\ 
{\it Key words and phrases.} Monge-Kantorovich mass transportation, resource allocation, optimal coupling, infinite dimensional linear programming, free boundary problems.}

\email{mccann@math.toronto.edu}
\address{Department of Mathematics, University of Toronto, Toronto Ontario M5S 2E4 Canada}



\begin{abstract}
The classical problem of optimal transportation can be formulated as a linear optimization problem on a convex domain: among all joint measures with fixed marginals find the optimal one, where optimality is measured against a cost function.
Here we consider a natural but largely unexplored
variant of this problem by imposing a pointwise
constraint on the joint (absolutely continuous) measures:
among all joint densities with fixed marginals and which are dominated by a given
density, find the optimal one.  For this variant, we show
local non-degeneracy of the cost function implies every minimizer is extremal
in the convex set of competitors, hence unique.
An appendix develops rudiments of a duality
theory for this problem, which allows us to compute several suggestive examples.
\end{abstract}

\maketitle

\setcounter{secnumdepth}{2}

\section{Introduction}

The optimal transportation problem of Monge \cite{Monge81} and
Kantorovich \cite{Kantorovich42} has attracted much attention
in recent years; see the surveys
\cite{AmbrosioGigli11p} \cite{MG} \cite{Villani1} \cite{Villani2}.
However,  there is a variant of the problem which is almost as natural but
remains unexplored outside the discrete setting.  This variant, tackled below,
involves imposing capacity constraints which limit the amount transported
between any given source and corresponding sink.

Let $L^1_{c} (\mathbb{R}^n)$ denote the space of $L^1(\mathbb{R}^n)$-functions with compact
support, where $L^1$ is with respect to Lebesgue measure.
In this paper functions typically represent mass densities.
Given densities $0 \leq f, g \in L^1_{c} (\mathbb{R}^d)$ with same total mass
$\int f = \int g$, let  $\Gamma(f, g)$ denote the set of {\it joint densities}
$0 \leq h \in L^1_{c}( \X \times \Y)$ which have $f$ and $g$ as their marginals:
$f(x) = \int_{\Y} h(x,y)dy$ and $g(y) = \int_{\X} h(x,y)dx$. The set $\Gamma(f, g)$
is a convex set.\\

A {\it cost function} $c(x,y)$ represents the cost per unit mass for transporting material
from $x\in\X$ to $y\in\Y$. Given densities $0 \leq f, g\in L^1_{c}(\X)$ with same total mass,
and a cost $c(x,y)$, the problem of {\it optimal transportation} is to minimize the {\it transportation cost}
\begin{eqnarray}\label{transportation_cost}
I_c(h):=\int_{\X \times \Y} c(x,y)h(x,y)dx dy
\end{eqnarray}
among joint densities $h$ in $\Gamma(f, g)$, to obtain the optimal cost
\begin{eqnarray}\label{unconstrained: primal}
\underset{h\in\Gamma(f, g)}{\mbox{inf}} I_c(h).
\end{eqnarray}

In the context of transportation, a joint density $h\in\Gamma(f, g)$ can be thought of as representing a transportation plan.

In this paper we will sometimes refer to the traditional optimal
transportation problem as the {\it unconstrained} optimal transportation problem.\\

Given $0 \leq \overline{h}\in L^\infty(\X \times \Y)$ of compact support, we let $\Gamma(f, g)^{\overline{h}}$ denote the set of all $h\in\Gamma(f, g)$ {\it dominated} by $\overline{h}$, that is $h \leq \overline{h}$ almost everywhere. The set  $\Gamma(f, g)^{\overline{h}}$ is a convex set.\\

The optimization problem we will be concerned with in this paper---the {\it optimal transportation with capacity constraints}---is to minimize the transportation cost (\ref{transportation_cost})
among joint densities $h$ in $\Gamma(f, g)^{\overline{h}}$, to obtain the optimal cost under the {\it capacity constraint} $\overline{h}$
\begin{eqnarray}\label{eqn: constraind_optimal_cost}
\underset{h\in\Gamma(f, g)^{\overline{h}}}{\mbox{inf}} I_c(h).
\end{eqnarray}

\noindent{\bf Interpretation.} As an example of an optimal transportation problem in the
discrete case~\cite[Chapter 3]{Villani2}, consider a large number of bakeries producing
loaves of bread that should be transported (by donkeys) to caf\'es. The problem is to
find where each unit of bread should go so as to minimize the transportation cost.
The unconstrained optimal transportation problem assumes ideal donkeys that can
transport any amount of bread. The constrained version discussed here takes into
account the capacity limitations of the donkeys --- assuming of course
that each (cafe, bakery) pair has a donkey at its disposal,  and that no donkey services
more than one cafe and one bakery.

\begin{ex}\label{example} (Constrained optimal solution concentrates on `diagonal tiles in a $2 \times 2$ checker board' in response to an integer constraint.)

\begin{figure}[h!]
\centering
\subfloat[$h$]{
\label{fig:subfig:a}
\includegraphics[width=1in]{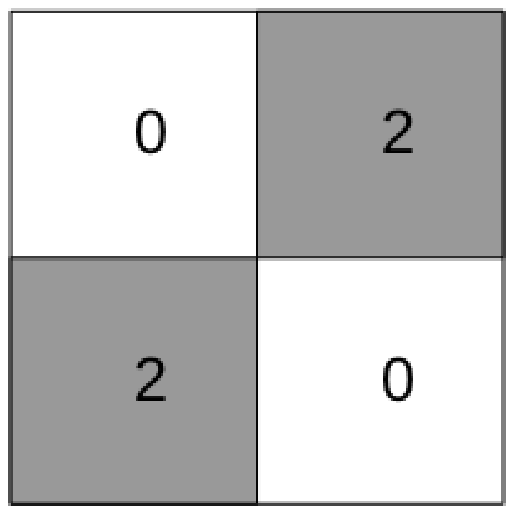}}
\hspace{1in}
\subfloat[$\overline{h}$]{
\label{fig:subfig:b}
\includegraphics[width=1in]{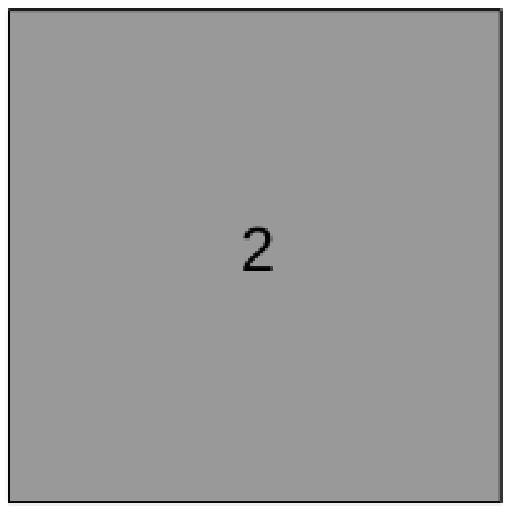}}
\caption{}
\label{fig:subfig}
\end{figure}

Let $I$ be the closed interval $[-\frac{1}{2},\frac{1}{2}] \subset \mathbb{R}^1$ and let $                                                                                                                                                                                                                                                                                                                                                                                                                                                                                                                                                                                                                                                                                                                                                                                                                                                                                                                                                                                                                                                                                                                                                                                                                                                                                                                                                                                                                                                                                                                                                                                                                                                                                                                                                                                                                                                                                                                                                                                                                                                                                                                                                                                                                                                                                                                                                                                                                                                                                                                                                                      f=g=1_I$ have constant density $1$ on $I$ (here $1_I$ is the characteristic function of the set $I$). Let $\overline{h}=2 \cdot 1_{I^2}$ have constant density $2$ on $I^2$ (figure 1B). Note that $0 \leq f, g\in L^1_{c}(I)$ have same total mass $1$, and that $\Gamma(f, g)^{\overline{h}}\neq \emptyset$ since it contains $1_{I^2}$. Let $c(x,y)=\frac{1}{2}|x-y|^2$. Then, as explained in the appendix, $I_c(\cdot)$ attains its minimal value on $\Gamma(f, g)^{\overline{h}}$ at (see figure 1A)
\begin{eqnarray}\label{eqn:example}
h(x,y):= \left\{
\begin{array}{cc}
2 & \mbox{ on } [-\frac{1}{2},0] \times [-\frac{1}{2},0] \cup [ 0,\frac{1}{2} ] \times [0, \frac{1}{2} ] \\
0 & \mbox{ otherwise. }
\end{array}\right.
\end{eqnarray}
\end{ex}

Other examples can be derived from this one (see Remark~\ref{more_examples}).
Lest such examples seem obvious, we also pose the following open problem:\\

\begin{ex}\label{example2} (Open problem.)

\begin{figure}[h!]
\centering
\subfloat[$h$]{
\label{fig:subfig:a}
\includegraphics[width=1in]{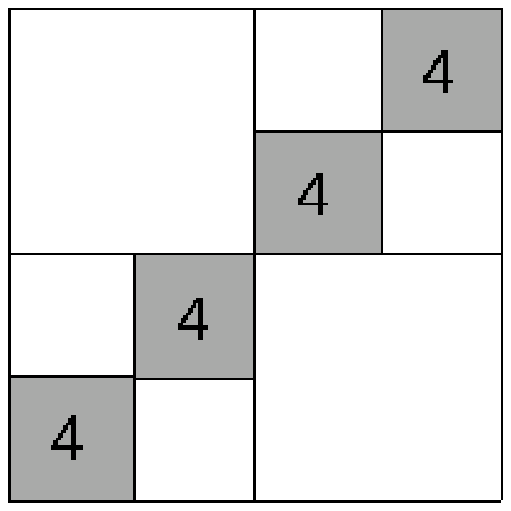}}
\hspace{1in}
\subfloat[$\overline{h}$]{
\label{fig:subfig:b}
\includegraphics[width=1in]{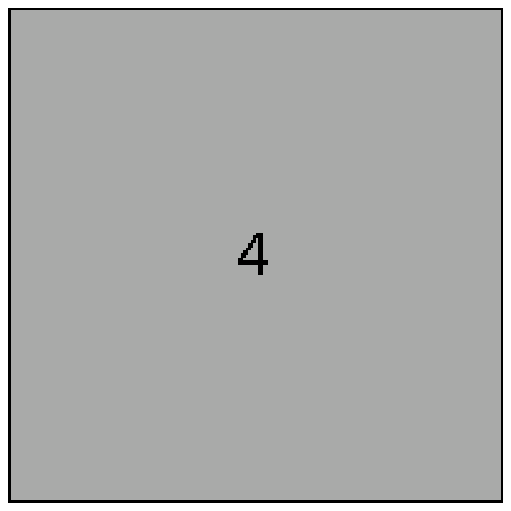}}
\hspace{1in}
\subfloat[$\Delta h$]{
\label{fig:subfig:c}
\includegraphics[width=1in]{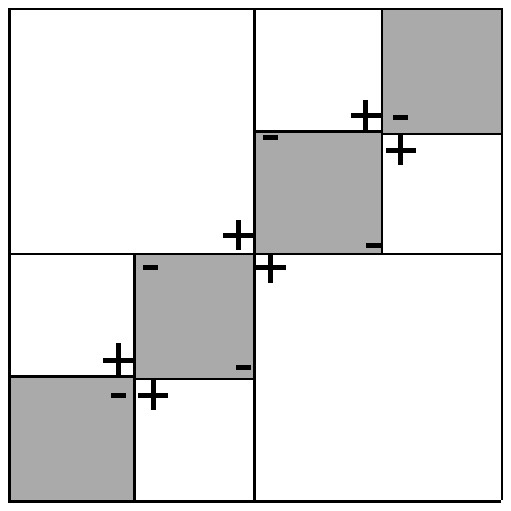}}
\caption{}
\label{fig:subfig}
\end{figure}

Let $I, f, g$ and $c$ be as in example~\ref{example}. Let $\overline{h}=4 \cdot 1_{I^2}$ have constant density $4$ on $I^2$ (figure 2B). After considering example~\ref{example} it is natural to guess that $I_c(\cdot)$ attains its minimal value on $\Gamma(f, g)^{\overline{h}}$ at (see figure 2A)
\begin{eqnarray}\label{eqn:example}
h(x,y):= \left\{
\begin{array}{cc}
4 & \mbox{ on } S\\
0 & \mbox{ otherwise, }
\end{array}\right.
\end{eqnarray}

where $S:=[-\frac{1}{2},-\frac{1}{4}] \times [-\frac{1}{2},-\frac{1}{4}] \cup [-\frac{1}{4},0] \times [-\frac{1}{4}, 0] \cup [ 0,\frac{1}{4} ] \times [0, \frac{1}{4} ] \cup  [\frac{1}{4},\frac{1}{2}] \times [\frac{1}{4},\frac{1}{2}]$.
Surprisingly, this is not the case. The perturbation $\Delta h$ in figure 2C reduces the total cost of $h$. Here `$+$' represents adding $\delta$ mass and `$-$' subtracting $\delta$ mass. Since adding/subtracting mass near the diagonal has negligible cost the net contribution of $\Delta h$ is dominated by the four minuses near the four points $(-\frac{1}{4}, 0), (0, -\frac{1}{4}), (0, \frac{1}{4})$ and $(\frac{1}{4}, 0)$.
So $\Delta h$ strictly reduces the total cost of $h$. We don't know the true optimizer
for this example. \\
\end{ex}

\begin{ex}\label{example3} (Constrained optimal solution with respect to periodic cost concentrates on `diagonal strip'.)

\begin{figure}[h!]
\centering
\subfloat[$R'$]{
\label{fig:subfig:a}
\includegraphics[width=1.8in]{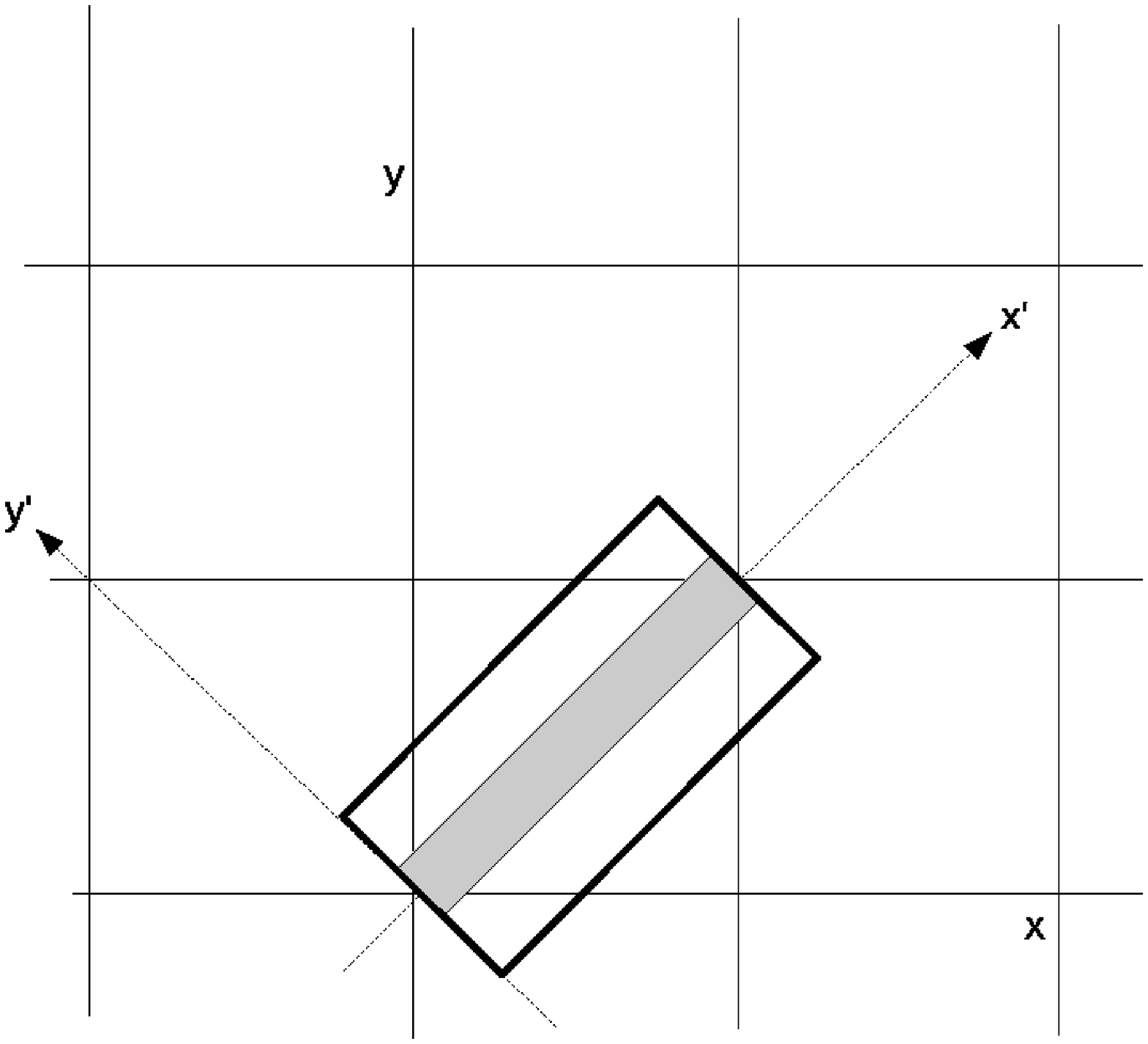}}
\hspace{1in}
\subfloat[$R$]{
\label{fig:subfig:b}
\includegraphics[width=1.8in]{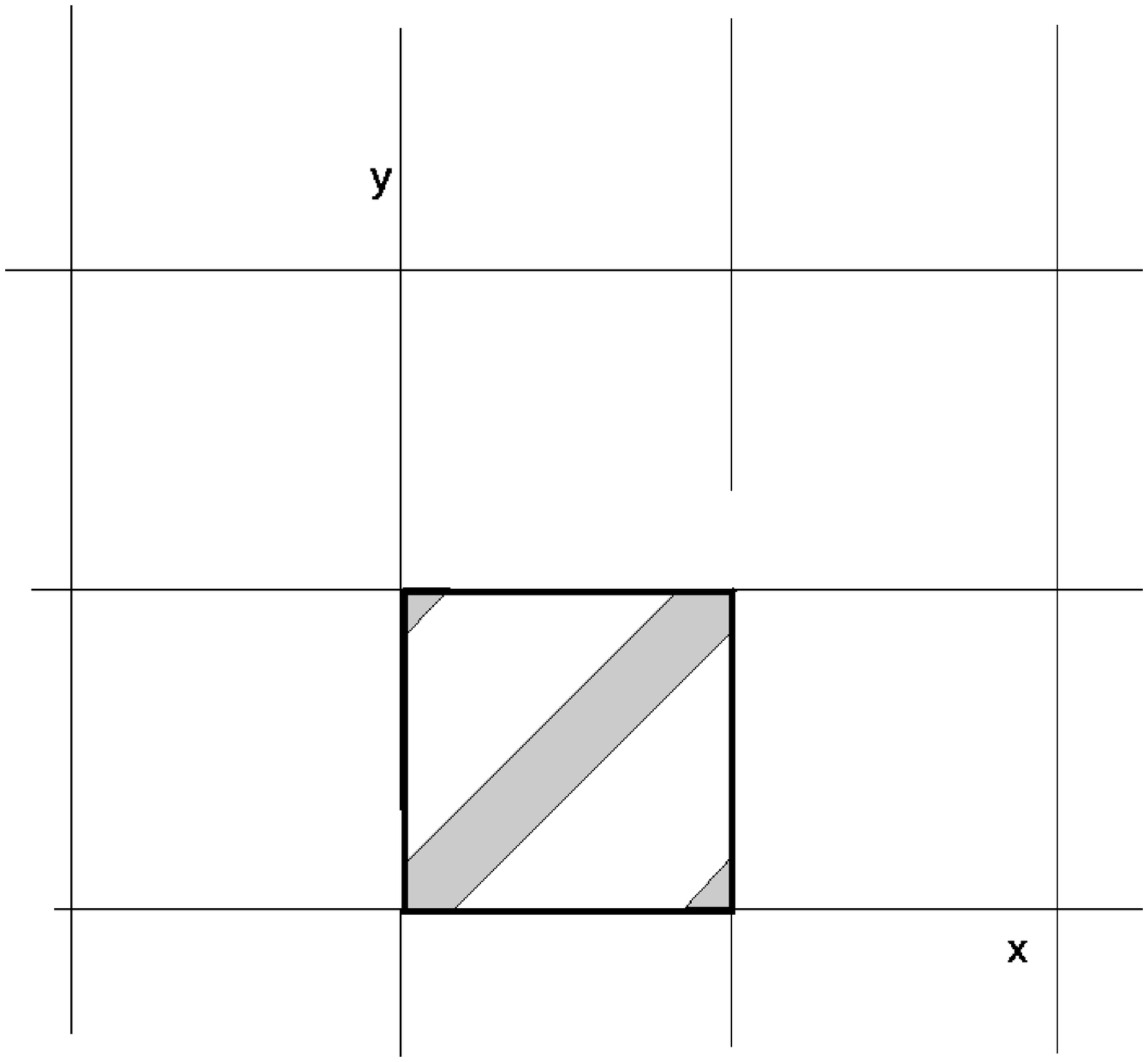}}
\caption{}
\label{fig:subfig}
\end{figure}

Let $\mathbb{R}^2 / \mathbb{Z}^2$ be the periodic unit square, that is $\mathbb{R}^2$ where $(x,y)$ is identified with $(x',y')$ whenever $x-x', y-y' \in \mathbb{Z}$, and put the {\it periodic } cost function $c(x,y)=\underset{n\in \mathbb{Z}}{\mbox{inf}} |x-y-n|^2$ on it. Two fundamental domains are $R$ (see figure 3B), and $R'$ (see figure 3A).

The coordinate change $x':=y+x$, $y':=y-x$ maps $R$ bijectively onto a square of side-length $\sqrt{2}$, which can be identified with $R'$.
The cost becomes $c(x',y')=\underset{n\in \mathbb{Z}}{\mbox{inf}} |y'-n|^2$, which on $R'$ is just
$c(x',y')=y'^2$. Note that in the $x',y'$ coordinates, the cost is constant along lines parallel to $x'$. Given total mass $1$ and constant capacity bound $\overline{h} \geq 1$ on the periodic square, let
\begin{eqnarray}\label{eqn:example3}
h_0(x,y):= \left\{
\begin{array}{cc}
\overline{h} & \mbox{ on } S\\
0 & \mbox{ otherwise, }
\end{array}\right.
\end{eqnarray}

where $S$ is a diagonal strip in $R'$ of width $w=\frac{1}{\overline{h}\sqrt{2}}$ and length $\sqrt{2}$ centered about the diagonal $x'$ (see shaded strip in figure 3A). From the simple form of the cost in the $x',y'$ coordinates it can be easily seen that $h_0$ is the optimal way to fit mass $1$ into $R'$ while respecting the bound $\overline{h}$: $h_0= \underset{\underset{ mass(h)=1}{h \leq \overline{h}}}{\mbox{argmin}} \int_{R'} c h $. In particular $h_0= \underset{h \in \Gamma(1_I, 1_I)^{\overline{h}}}{\mbox{argmin}} \int_{R'} c h $, where $1_I$ is equal to the marginals of $h_0$. As a function on $R$, $h_0$ is supported on the shaded region in figure 3B.

Note that the uniqueness result, Theorem~\ref{thm:uniqueness}, still applies to this
cost, since it is $C^2$ and non-degenerate outside of two diagonal line segments on the periodic square.
\end{ex}

\noindent {\bf Motivation.} The thing to note from example~\ref{example}
is that at almost every point of the underlying space, the density $h$ of the optimal solution, is either equal to $0$ or to $\overline{h}$, the density of the capacity bound.
In the language developed below $h$ is geometrically extreme.

This example is special since the densities involved are both locally constant. It is easy to see that when $h$ and $\overline{h}$ are both constant in a neighbourhood of a point $(x_0,y_0)$, $h(x_0,y_0)$ must either equal $0$ or $\overline{h}(x_0,y_0)$: if  $0 < h(x_0,y_0) < \overline{h}(x_0,y_0)$ then a standard perturbation argument (see proof of Lemma~\ref{lem:necc_cond}) shows that $h$ cannot be optimal.

In general $\overline{h}$ and $h$ are not locally constant. But, one of the main insights
we exploit in this paper
is that, at an infinitesimal level they become constant: if we blow-up $\overline{h}$ and $h$ at a (Lebesgue) point, the blow-ups have constant densities (see (b) of Claim~\ref{claim:first}). In effect, blowing-up allows us to reduce the general case to the special case of locally constant densities, as is the case in example~\ref{example}.\\

\noindent {\bf Main result:  Existence and Uniqueness.} Proving solutions to the
capacity-constrained problem exist (Theorem~\ref{thm:existence}) requires very
minor modifications of the direct argument familiar from the unconstrained case.
The main result of this paper is therefore the uniqueness theorem
(Theorem~\ref{thm:uniqueness}). It says that under mild assumptions
on the cost and capacity bound, a solution to the capacity-constrained problem is unique.\\

\noindent {\bf Strategy.} Recall that a point of a convex set $\Gamma$ is called an {\it extreme point} if it is not an interior point of any line segment lying in $\Gamma$.
A density $h$ in $\Gamma(f, g)^{\overline{h}}$ will be called {\it geometrically extreme} (see Definition~\ref{defn:geometrically_extreme}) if there exists a (measurable) set $W\subset \mathbb{R}^{2d}$ such that $h(x,y)=\overline{h}(x,y)1_{W}(x,y)$ for almost every $(x,y)\in \mathbb{R}^{2d}$. (Such a density might be called `bang-bang' in the optimal control context). Observe that a density is an extreme point of $\Gamma(f, g)^{\overline{h}}$ if and only if it is geometrically extreme (with respect to $\overline{h}$).

It is well-known in the theory of linear programming that every continuous linear functional on a compact convex set attains its minimum at an extreme point. Our strategy for proving uniqueness in the problem at hand (Theorem~\ref{thm:uniqueness}) will be to show that {\em every} optimizer is geometrically extreme (Theorem~\ref{thm:optimality_ge}), hence is an extreme point of $\Gamma(f, g)^{\overline{h}}$. Since any convex combination of optimizers is again optimal (but fails to be geometrically extreme), it follows that no more than one optimizer exists.\\

\noindent {\bf Remark.} Once a solution is known to be geometrically extreme,
the entire problem is reduced to identifying the geometry of its support $W$.
Example \ref{example} shows the boundary of $W$ cannot generally be expected
to be smooth.  It is natural to wonder how to characterize $W$, and what kind
of geometric and analytic properties $\partial W$ will generally possess.\\

\noindent {\bf Main assumptions.} The main two assumptions for the uniqueness result
are that the capacity constraint $\overline{h}$ is uniformly bounded, and that the
cost $c(x,y)$ is non-degenerate (in the sense that $det D^2_{xy} c(x,y) \neq 0$ in
equation (\ref{Hessian})). Sufficiency of a local condition for uniqueness
is somewhat of a surprise; c.f. the cylindrical example of~\cite[p.10]{MPW}, which suggests
that --- except in one dimension --- no {\em local} hypothesis on the cost function
is sufficient to guarantee uniqueness of minimizer in the unconstrained case.\\

\noindent {\bf Remark.} Although capacity constraints are quite standard in the discrete
case, 
they do not seem to have been much considered in the continuum setting.
On the other hand, the work of Brenier~\cite{B87}\cite{B91} marks a turning point
in our understanding of unconstrained transportation in the continuum setting,
and we were surprised to discover that many of the insights gained in that context
do not seem to adapt easily to the capacity-constrained problem.\\

\noindent {\bf Acknowledgements}. The first author would like to thank Najma Ahmad for teaching him the basics of optimal transportation and introducing him to RJM. Example~\ref{example3} arose from a conversation with Yann Brenier.

\section{Notation, Conventions, and assumptions}\label{section:notation}

For a differentiable map $T:\mathbb{R}^n \rightarrow \mathbb{R}^m$ let $DT$ denote the derivative of $T$, that is the Jacobian matrix of all partial derivatives $\left(\frac{\partial T_i}{\partial x_j}\right)_{ 1 \leq i \leq m, 1 \leq j \leq n}$.\\

Let $D^2c(x,y)$ denote the Hessian of $c$ at the point $(x,y)$, that is the $2d \times 2d$ matrix of second order partial derivatives of the function $c$ at $(x,y)$. Let $D^2_{xy}c(x,y)$ denote the $d\times d$ matrix of mixed second order partial derivatives of $c$ at $(x,y)$, that is $\left(\frac{\partial^2 c}{\partial x_i \partial y_j}(x,y)\right)_{ 1 \leq i,j\leq d}$. Note that $D^2_{xy}c(x,y)$ is a sub matrix of the Hessian matrix:
\begin{eqnarray}\label{Hessian}
D^2c(x,y)=\left( \begin{array}{cc} D^2_{xx}c(x,y) & D^2_{xy}c(x,y) \\D^2_{yx}c(x,y) & D^2_{yy}c(x,y) \end{array} \right).
\end{eqnarray}
\\
The $n$-dimensional Lebesgue measure on $\mathbb{R}^n$ will be denoted by $\Leb^n$.\\

Let $\pi_X :\X\times\Y \longrightarrow \X : (x,y) \mapsto x$ and $\pi_Y :\X\times\Y \longrightarrow \X : (x,y) \mapsto y$ be the canonical projections.
For a density function $h \in L^1( \X \times \Y)$ denote its marginals by $h_X$ and $h_Y$: $h_X(x):= \int_{\Y} h(x,y)dy$ and $h_Y(y):= \int_{\X} h(x,y)dx$.

\subsection{Assumptions on the cost}\label{assumptions:cost} Consider the following assumptions on the cost:
\begin{itemize}
\item[(C1)] $c(x,y)$ is bounded,
\item[(C2)] there is a Lebesgue negligible closed set $Z \subset \X \times \Y$ such that $c(x,y)\in C^2(\X \times \Y \setminus Z)$ and,
\item[(C3)] $c(x,y)$ is {\it non-degenerate}: $det D^2_{xy}c(x,y)\neq 0$ for all $(x,y)\in \X \times \Y \setminus Z$.
\end{itemize}

\subsection{Assumptions on the capacity constraint}\label{assumptions:capacity} In section~\ref{section:existence} and from section~\ref{section:inherited} onwards, we will always assume that $\overline{h}$ is measurable and non-negative, has compact support, and is bounded on $\X \times \Y=\mathbb{R}^{2d}$. 

Given marginal densities $0\leq f, g\in L^1_{c}(\X)$ with same total mass, to avoid talking about the trivial case, we will always assume that a feasible solution exists: $\Gamma(f, g)^{\overline{h}}\neq \emptyset$.

\begin{rmk} To guarantee that the transportation cost $I_c(h)$ is finite we require $\overline{h}$ to have compact support: since the cost $c$ is always assumed continuous and bounded,  $\overline{h}$ having compact support makes sure that $I_c(h) \leq I_c(\overline{h}) <\infty$ for all $h \leq \overline{h}$. Note that when $\overline{h}$ has compact support, so will any density in $\Gamma(f, g)^{\overline{h}}$, as well as $f$ and $g$.
\end{rmk}

\section{Existence}\label{section:existence}

For simplicity we prove existence only in the case when $\overline{h}$ has compact support.

\begin{thm}\label{thm:existence} (Existence)
Assume that the cost $c$ is continuous and bounded. Take $0 \leq \overline{h}\in L^\infty(\X \times \Y)$ of compact support and let $0 \leq f, g \in L^1_{c}(\X)$ be marginal densities
for which $\Gamma(f, g)^{\overline{h}}\neq \emptyset$.
Then the corresponding problem of optimal transportation with capacity constraints (\ref{eqn: constraind_optimal_cost}) has a solution. That is, $I_c(\cdot)$ attains its minimum value on $\Gamma(f, g)^{\overline{h}}$.
\end{thm}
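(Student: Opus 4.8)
The plan is to use the direct method of the calculus of variations (equivalently, the standard compactness argument familiar from the unconstrained Monge--Kantorovich problem), which adapts almost verbatim to the capacity-constrained setting. Let $m:=\int f=\int g$ and fix a compact set $K\subset\X\times\Y$ containing the support of $\overline{h}$, so that every $h\in\Gamma(f,g)^{\overline{h}}$ is supported in $K$. First I would observe that $\Gamma(f,g)^{\overline{h}}$, viewed as a subset of the space of finite Borel measures on $\X\times\Y$, is uniformly bounded in total variation (each $h$ has mass $m$) and uniformly tight (all supported in the fixed compact $K$). Hence by Prokhorov's theorem any minimizing sequence $h_k$ for $I_c(\cdot)$ has a subsequence converging weakly-$*$ (i.e.\ against bounded continuous test functions) to some nonnegative measure $\mu$ on $\X\times\Y$.

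Next I would verify that the limit $\mu$ lies in $\Gamma(f,g)^{\overline{h}}$, i.e.\ that the constraints are preserved under weak-$*$ convergence. The marginal constraints pass to the limit by testing against functions of $x$ alone and of $y$ alone, exactly as in the unconstrained case, so $\pi_{X\#}\mu=f\,\Leb^d$ and $\pi_{Y\#}\mu=g\,\Leb^d$; in particular $\mu$ is absolutely continuous with compactly supported density $h\in L^1_c$. The domination constraint $h\le\overline{h}$ a.e.\ is the one genuinely new point: since the set of measures $\{\nu\ge 0 : \nu\le\overline{h}\,\Leb^{2d}\}$ is convex and closed under weak-$*$ convergence (for any nonnegative $\varphi\in C_c$, $\int\varphi\,d\nu\le\int\varphi\,\overline{h}$ is a closed condition, and testing against enough $\varphi$ characterizes the inequality of densities), the constraint survives the limit. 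Alternatively one can phrase this via lower semicontinuity: $h_k\le\overline{h}$ means $\overline{h}\,\Leb^{2d}-h_k\,\Leb^{2d}\ge 0$, and the cone of nonnegative measures is weak-$*$ closed. Finally, since $c$ is continuous and bounded on $\X\times\Y$ (assumptions of the theorem), $I_c$ is actually weak-$*$ continuous on measures supported in $K$: $I_c(h_k)=\int c\,dh_k\to\int c\,d\mu=I_c(h)$. Therefore $I_c(h)=\lim_k I_c(h_k)=\inf_{\Gamma(f,g)^{\overline{h}}}I_c$, so $h$ is a minimizer, and $\Gamma(f,g)^{\overline{h}}\neq\emptyset$ guarantees the infimum is over a nonempty set, hence finite.

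The main obstacle is the verification that the domination constraint $h\le\overline{h}$ is stable under weak-$*$ convergence, since $\overline{h}$ is merely $L^\infty$ rather than continuous; the clean way around this is to test only against nonnegative $\varphi\in C_c(\X\times\Y)$ and use that $\int\varphi\,dh_k\le\int\varphi\,\overline{h}\,d\Leb^{2d}$ for all such $\varphi$ implies, in the limit, $\int\varphi\,dh\le\int\varphi\,\overline{h}\,d\Leb^{2d}$, and then invoke the Riesz--Markov characterization to conclude $h\le\overline{h}$ a.e.\ (equivalently, mollify $\overline{h}$ from above). Everything else is routine and parallels the unconstrained existence proof; boundedness and continuity of $c$ make the cost functional continuous rather than merely lower semicontinuous, which is even more favorable than the classical situation.
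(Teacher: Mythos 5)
Your proof is correct, but it takes a genuinely different functional-analytic route from the paper. You work in the space of finite Borel measures on $\X\times\Y$, obtain compactness of a minimizing sequence from Prokhorov's theorem (uniform mass and uniform tightness from the fixed compact support $K\supset\mathrm{spt}(\overline{h})$), and then verify stability of the three constraints under weak-$*$ convergence against $C_c$ or $C_b$ test functions---in particular, recovering $h\le\overline{h}$ a.e.\ by observing that the cone of nonnegative measures is weak-$*$ closed, so $\overline{h}\,\Leb^{2d}-\mu\ge 0$ survives the limit. The paper instead exploits precisely the feature that distinguishes the capacity-constrained problem from the unconstrained one: since $0\le h\le\overline{h}\in L^\infty$ with $\overline{h}$ of compact support, every competitor lies in the fixed ball $B_{\|\overline{h}\|_2}(0)\subset L^2(X\times Y)$. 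Banach--Alaoglu (equivalently, reflexivity of $L^2$) gives weak compactness of that ball, $\Gamma(f,g)^{\overline{h}}$ is shown to be weakly closed by pairing against $L^2$ test functions, and $I_c$ is weakly continuous because $c\cdot 1_{X\times Y}\in L^2$. The payoffs differ slightly: your argument is the default one from unconstrained Monge--Kantorovich and requires no $L^p$ structure, but you must then re-derive absolute continuity and the pointwise bound on the limit from measure-theoretic considerations (Riesz--Markov or mollification); the paper's $L^2$ framework makes the domination constraint and the marginals pass to the limit by nothing more than pairing with nonnegative $L^2$ functions, at the cost of needing the (automatic, here) $L^2$ bound. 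Both are complete and rigorous; the $L^2$ route is a bit cleaner precisely because the capacity constraint hands you a uniform $L^\infty$ bound for free.
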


\begin{proof}
Let $X, Y \subset \X$ be compact subsets such that $spt(\overline{h})\subset X \times Y$.
Note that the support of any $h\in\Gamma(f, g)^{\overline{h}}$ is also contained in $X \times Y$, and that $spt(f) \subset X$, $spt(g) \subset Y$.

Since $\overline{h}$ is bounded and has compact support, $\overline{h}\in L^p(X \times Y)$ for all $1 \leq p \leq \infty$, in particular $\overline{h}\in L^2(X \times Y)$. Consequently $\Gamma(f, g)^{\overline{h}} \subset L^2(X \times Y)$.

We shall now specify a topology on $L^2 (X\times Y)$ for which $\Gamma(f, g)^{\overline{h}}$ is compact and $I_c(\cdot)$ continuous. Existence then follows from the general fact that a continuous function attains it minimum on a compact set. For $X$ and $Y$ compact, it is convenient to use the weak-$*$ topology, as in the unconstrained transportation problem (e.g.~\cite{Villani1}). Since $L^2$ is reflexive, the weak-$*$ topology is the same as the weak topology.
For the sake of completeness, we outline the direct argument despite its standard nature.

Give $L^2 (X\times Y)$ the weak topology. By the Banach-Alaoglu Theorem any closed ball $B_r(0)$ of radius $r < \infty$ in $L^2 (X\times Y)$ is weak-$*$, hence weak, compact.
Note that any $h$ with $0\leq h\leq \overline{h}$ satisfies $||h||_2 \leq ||\overline{h}||_2 =:R < \infty$.  Hence $\Gamma(g, h)^{\overline{h}}$ is contained in $B_R(0)$. So in order to show that $\Gamma(f, g)^{\overline{h}}$ is compact, it is enough to show that it is closed.

Let $h_n$ be a sequence in $\Gamma(f, g)^{\overline{h}}$ which converges weakly to $h_\infty \in B_R(0)$. We want to show  $h_\infty \in \Gamma(f, g)^{\overline{h}}$, that is that $h_\infty$ is dominated by $\overline{h}$ almost everywhere and has $f$ and $g$ as marginals.

Weak convergence means that for all $\psi\in L^2(X \times Y)$,
\begin{equation}\label{weak_convergence}
\lim_{n \to \infty} \int_{X \times Y} h_n(x,y)\psi(x,y) = \int_{X \times Y} h_\infty(x,y)\psi(x,y).
\end{equation}

Since $h_n \leq \overline{h}$, $\int h_n \psi \leq \int \overline{h} \psi$ for all non-negative $\psi\in  L^2$. Letting $n\rightarrow \infty$, $\int h_\infty \psi \leq \int \overline{h} \psi$ for all non-negative $\psi\in L^2$, hence $h_\infty \leq \overline{h}$ almost everywhere.

It is easy to see that $(h_\infty)_X=f$ by using the definition of weak convergence (\ref{weak_convergence}) with $\psi(x,y):=\psi(x)1_{Y}(y)$, where $\psi \in L^2(X)$. A similar calculation shows that $(h_\infty)_{Y}=g$. It follows that $\Gamma(f, g)^{\overline{h}}$ is weakly closed.

To see $I_c(\cdot): \Gamma(f, g)^{\overline{h}} \rightarrow \mathbb{R}$ is continuous with respect to the weak topology, use equation (\ref{weak_convergence}) with $\psi(x,y):=c(x,y)1_{X \times Y}(x,y)$, which is in $L^2(X \times Y)$ since $c$ is assumed bounded, to conclude that

\begin{equation*}
I_c (h_\infty )=\int h_\infty c = \lim_{n \to \infty} \int h_n c = \lim_{n \to \infty} I_c(h_n).
\end{equation*}

Existence in the constrained case follows.

\end{proof}

\section{Blowing up a density near a Lebesgue point}\label{section:blow-up}

When $0\le h$ is dominated by $\overline{h}\in L^\infty$ it is also bounded.
Even when $\overline{h}$ is continuous, $h \in L^1$ may not be continuous as we have seen
in example~\ref{example}; however it is necessarily measurable, belonging to $L^1$.
The notion of a Lebesgue point is a substitute for the notion of a point of continuity in the measure theoretic context. In this section we study the behaviour of $h$ near its Lebesgue points.\\

Given a Lebesgue point $(x_0,y_0)\in \X \times \Y$ of $0 \leq k\in L^1_{c}(\X\times\Y)$, consider the constant function $k_{\infty}(x,y):=k(x_0,y_0)$ defined on the unit cube $Q:=[-\frac{1}{2},\frac{1}{2}]^{d}\times [-\frac{1}{2},\frac{1}{2}]^{d}$. We call $k_\infty$ the blow-up of $k$ at $(x_0,y_0)$.\\

Let $Q_n=Q_n(x_0,y_0):=(x_0,y_0)+[-\frac{1}{2n},\frac{1}{2n}]^{d}\times [-\frac{1}{2n},\frac{1}{2n}]^{d}$ denote small cubical neighbourhoods of volume $(\frac{1}{n})^{2d}$ centered at $(x_0,y_0)\in \X\times\Y$.

Let $\varphi_n:Q\rightarrow Q_n\subset \X\times\Y$ be given by $\varphi_n(x,y)=(x_0,y_0)+\frac{1}{n}(x,y)$. Let $k_n:Q\rightarrow \mathbb{R}$ be defined by
\begin{equation}\label{blow_up_seq}
k_n:=k \circ \varphi_n.
\end{equation}

It will follow from claim \ref{claim:first} that $k_n$ converges to $k_\infty$ strongly in $L^1(Q)$.

\begin{defn} We call $k_n$ the {\it blow-up sequence} of $k$ at $(x_0,y_0)$. We call its limit $k_\infty$, the {\it blow-up} of $k$ at $(x_0,y_0)$.
\end{defn}

We recall some basic facts about Lebesgue points from~\cite{Rudin}.\\
Let $f\in L^1(\mathbb{R}^n)$. Any $x\in \mathbb{R}^n$ for which it is true that
$$\underset{r \rightarrow 0}{lim} \frac{1}{\Leb^n [B_r(x)]}\int_{B_r(x)}|f(y)-f(x)|dy=0$$
is called a {\it Lebesgue point} of $f$. Here $B_r(x)$ denotes the open ball with center $x$ and radius $r>0$. At a Lebesgue point $x$, an $L^1$-function $f$ has a well defined value:
$$f(x)=\underset{r \rightarrow 0}{lim} \frac{1}{\Leb^n [R_r(x)]}\int_{R_r(x)}f(y)dy.$$
Here $\{R_r(x)\}$ is any sequence of sets which `shrink nicely' to $x$ (e.g. cubes, spheres).

If $x$ is a point of continuity of $f$ then $x$ is a Lebesgue point of $f$. In particular, for a continuous function, every point is a Lebesgue point. Given $f\in L^1(\mathbb{R}^n)$, Lebesgue's Theorem says that almost every point in $\mathbb{R}^{n}$ is a Lebesgue point of $f$.

\begin{claim}\label{claim:first}
Let $(x_0,y_0)$ be a Lebesgue point of $0 \leq k\in L^1_{c}(\X \times \Y)$. Let $k_n$ denote the blow-up sequence of $k$ at $(x_0,y_0)$ and let $k_\infty$ denote the blow-up of $k$ at $(x_0,y_0)$. Then:
\begin{enumerate}
\item[(a)] {$k_n \rightarrow k_\infty \mbox{ strongly in } L^1(Q)$, i.e. $||k_n-k_\infty||_{L^1(Q)}\rightarrow 0$,}
\item[(b)] {$k_n(x,y)=k(\frac{1}{n} x+x_0, \frac{1}{n} y+y_0)$ on $Q$.}
\end{enumerate}
\end{claim}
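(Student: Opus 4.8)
The plan is to prove part (b) first since it is a pure change-of-variables identity, and then derive part (a) as a consequence of the Lebesgue point property together with a standard substitution in the integral. For part (b), I would simply unwind the definitions: by \eqref{blow_up_seq} we have $k_n = k\circ\varphi_n$, and $\varphi_n(x,y) = (x_0,y_0) + \frac1n(x,y)$, so $k_n(x,y) = k\bigl(x_0 + \frac1n x,\; y_0 + \frac1n y\bigr)$ for every $(x,y)\in Q$. That is exactly the asserted formula (modulo the harmless reordering of coordinates), so (b) requires essentially no work beyond citing the definitions; its real role is to make the formula available for the estimate in (a).

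For part (a), the idea is to write the $L^1(Q)$-norm of $k_n - k_\infty$ explicitly and recognize it as a rescaled average of $|k - k(x_0,y_0)|$ over the shrinking cube $Q_n$. Using (b) and the definition $k_\infty \equiv k(x_0,y_0)$,
\begin{equation*}
\|k_n - k_\infty\|_{L^1(Q)} = \int_Q \Bigl| k\bigl(x_0 + \tfrac1n x, y_0 + \tfrac1n y\bigr) - k(x_0,y_0)\Bigr|\,dx\,dy.
\end{equation*}
Now I would substitute $(u,v) = \varphi_n(x,y)$, so that $dx\,dy = n^{2d}\,du\,dv$ and the domain $Q$ maps onto $Q_n$, giving
\begin{equation*}
\|k_n - k_\infty\|_{L^1(Q)} = n^{2d}\int_{Q_n} |k(u,v) - k(x_0,y_0)|\,du\,dv = \frac{1}{\Leb^{2d}[Q_n]}\int_{Q_n}|k(u,v) - k(x_0,y_0)|\,du\,dv,
\end{equation*}
since $\Leb^{2d}[Q_n] = n^{-2d}$. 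The cubes $Q_n$ shrink nicely to $(x_0,y_0)$, so by the defining property of a Lebesgue point the right-hand side tends to $0$ as $n\to\infty$. This proves (a), and in particular $k_n \to k_\infty$ in $L^1(Q)$ as anticipated in the text preceding the claim.

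I do not expect any serious obstacle here: the statement is essentially a reformulation of the Lebesgue point definition once the correct change of variables is performed. The only point requiring a small amount of care is making sure the shrinking cubes $Q_n$ genuinely qualify as a family that ``shrinks nicely'' to $(x_0,y_0)$ in the sense recalled from \cite{Rudin} (they are centered cubes of vanishing diameter, hence comparable to balls of the same radius, so this is immediate), and noting that $k$ being non-negative and in $L^1_c$ causes no issues with integrability of $|k - k(x_0,y_0)|$ near $(x_0,y_0)$. Everything else is bookkeeping with the Jacobian factor $n^{2d}$.
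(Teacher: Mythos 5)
Your proof is correct and follows essentially the same route as the paper: both parts reduce to unwinding the definition of $\varphi_n$, and part (a) is obtained by the substitution $(u,v)=\varphi_n(x,y)$ which turns $\|k_n-k_\infty\|_{L^1(Q)}$ into the Lebesgue average $\frac{1}{\Leb^{2d}[Q_n]}\int_{Q_n}|k-k(x_0,y_0)|$, which vanishes by the Lebesgue point hypothesis. The only cosmetic difference is that you prove (b) before (a), whereas the paper does the reverse.
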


\begin{proof}
(a) Letting $\varphi_n$ denote the dilation from \eqref{blow_up_seq} yields
\begin{eqnarray*}
\int_{Q} | k_n - k_\infty | dx dy &=& \int_{Q} | (k - k(x_0,y_0)) \circ \varphi_n | dx dy\\
&=& \frac{1}{\Leb^{2d}[Q_n]} \int_{Q} | (k - k(x_0,y_0)) \circ \varphi_n ||det D \varphi_n | dx dy\\
&=& \frac{1}{\Leb^{2d}[Q_n]} \int_{Q_n} | (k(x,y) - k(x_0,y_0))| dx dy \rightarrow 0,
\end{eqnarray*}
as $n \rightarrow \infty$.
The first equality is the definition of $k_n$, and the second equality uses $|det D\varphi_n(x,y)|=\left| \begin{array}{cc} \frac{1}{n^d} & 0 \\ 0  & \frac{1}{n^d} \end{array} \right|=\frac{1}{n^{2d}}=\Leb^{2d}[Q_n]$. The last equality follows from the change of variable formula and the limit at the end follows from $(x_0,y_0)$ being a Lebesgue point of $k$.

(b) follows immediately from the definition of $\varphi_n$.
\end{proof}

For later use we record the following immediate consequence of above claim.

\begin{rmk}\label{cor:blow-up}
Let $0 \leq \overline{h}\in L^\infty(\X \times \Y)$ have compact support. Suppose that $0 \leq h \leq \overline{h}$ and that $(x_0,y_0)$ is a common Lebesgue point of $h$ and $\overline{h}$.
Then, letting $h_n$ and $\bar h_n$ denote the blow-up sequences of $h$ and $\bar h$ at $(x_0,y_0)$,
\begin{enumerate}
\item[(a)] {$h_{\infty}(x,y)= h(x_0,y_0)$ on $ Q$ and $|| h_n - h_\infty||_{L^1(Q)}\rightarrow 0$, and}
\item[(b)] {$\overline{h}_{\infty}(x,y)= \overline{h}(x_0,y_0)$ on $Q$ and $|| \overline{h}_n - \overline{h}_\infty||_{L^1(Q)}\rightarrow 0$.}
\end{enumerate}
\end{rmk}

The following proposition clarifies
the nature of convergence of $h_n$ on $Q$.
It says that (for a subsequence $n(i)$) $Q$ can be partitioned into a `good' set,
$\widetilde{F}_n$, and a `bad' set, $\widetilde{E}_n$.
On the good sets $h_n$ converges `uniformly' to $h(x_0,y_0)$ while on
the bad sets it is uniformly bounded; the good sets are large and the bad are small.
Recall that the function $\overline{h}$ is assumed to be bounded and that $Q_1$ is compact.

\begin{propn}\label{propn:first}
Let $0 \leq \overline{h}\in L^1 \cap L^\infty(\X \times \Y)$.
Suppose that $0 \leq h \leq \overline{h}$ almost everywhere, and let $h_n$
denote the blow-up sequence of $h$ at a Lebesgue point $(x_0,y_0)$.
For some subsequence indexed by $n \in N_0 = \{n_1 < n_2< \cdots \}$
there exist non-negative real numbers $\alpha_n \rightarrow 0$,
and Borel subsets $\widetilde{E}_n$ and
$\widetilde{F}_n:=Q \setminus \widetilde{E}_n$ of $Q$,
such that
\begin{enumerate}
\item[(a)] {$0 \leq \Leb^{2d}[\widetilde{E}_n]\leq \alpha_n$,}
\item[(b)] {$||h_n(x,y)-h(x_0,y_0)||_{L^\infty(\widetilde{F}_n)}\leq \alpha_n$,}
\item[(c)] {$|h_n(x,y)|\leq \|\overline{h}\|_{L^\infty(Q_1)}$ for almost every $(x,y)\in Q$.}
\end{enumerate}
\end{propn}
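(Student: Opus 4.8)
The plan is to extract the subsequence directly from the $L^1$ convergence $h_n \to h_\infty$ established in Claim~\ref{claim:first}(a) (applied to $k = h$, which is legitimate since $0 \le h \le \overline{h}$ forces $h \in L^1_c$, and $(x_0,y_0)$ is a Lebesgue point of $h$ by hypothesis), together with the standard fact that $L^1$ convergence passes to a subsequence converging almost everywhere, but upgraded via Egorov-type bookkeeping. First I would choose, for each $m \in \mathbb{N}$, an index $n_m$ along the convergent subsequence so that $\|h_{n_m} - h_\infty\|_{L^1(Q)} \le 2^{-m}$; set $\alpha_{n_m}$ to be something like $2^{-m/2}$ (any sequence tending to $0$ more slowly than the $L^1$ bound but still to $0$ works). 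Then define the bad set as the sublevel set where the pointwise discrepancy is large: $\widetilde{E}_{n_m} := \{(x,y) \in Q : |h_{n_m}(x,y) - h(x_0,y_0)| > \alpha_{n_m}\}$, and $\widetilde{F}_{n_m} := Q \setminus \widetilde{E}_{n_m}$. These are Borel since $h_{n_m}$ is measurable (it is $h$ composed with an affine map), and item (b) holds by construction: off $\widetilde{E}_{n_m}$ the discrepancy is at most $\alpha_{n_m}$, so the $L^\infty(\widetilde{F}_{n_m})$ norm of $h_{n_m}(\cdot) - h(x_0,y_0)$ is at most $\alpha_{n_m}$.

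For item (a) I would invoke Chebyshev's (Markov's) inequality: $\Leb^{2d}[\widetilde{E}_{n_m}] \le \alpha_{n_m}^{-1} \|h_{n_m} - h_\infty\|_{L^1(Q)} \le \alpha_{n_m}^{-1} 2^{-m} = 2^{-m/2} \to 0$, which I relabel as the bound $\alpha_n$ after possibly passing to a further subsequence or just renaming so that the \emph{same} $\alpha_n$ controls both (a) and (b) (taking $\alpha_n := \max(2^{-m/2}, \alpha_{n_m}^{-1}2^{-m})$ and noting both factors coincide up to the choice above). Item (c) is essentially immediate from Claim~\ref{claim:first}(b): $h_n(x,y) = h(\tfrac{1}{n}x + x_0, \tfrac{1}{n}y + y_0)$, and since $(x,y) \in Q$ the argument lies in $Q_n(x_0,y_0) \subseteq Q_1(x_0,y_0) =: Q_1$ for $n \ge 1$; because $0 \le h \le \overline{h}$ a.e., we get $|h_n(x,y)| \le \|\overline{h}\|_{L^\infty(Q_1)}$ for a.e.\ $(x,y) \in Q$ (the null set of exceptions pulls back to a null set under the affine rescaling, which has nonzero Jacobian).

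The one point requiring a little care — and the main (if modest) obstacle — is ensuring that a \emph{single} sequence $\alpha_n \to 0$ simultaneously dominates the measure of the bad set in (a) and the uniform discrepancy in (b), while the chosen subsequence is honest. This is handled by the two-parameter trade-off: given the $L^1$ rate $\varepsilon_m := \|h_{n_m} - h_\infty\|_{L^1(Q)}$, pick $\alpha_{n_m} := \sqrt{\varepsilon_m}$, so that (b) gives discrepancy $\le \sqrt{\varepsilon_m}$ off the bad set and Chebyshev gives $\Leb^{2d}[\widetilde{E}_{n_m}] \le \varepsilon_m / \sqrt{\varepsilon_m} = \sqrt{\varepsilon_m}$ — the same quantity, and it tends to $0$ since $\varepsilon_m \to 0$. (If some $\varepsilon_m = 0$ we may simply take $\widetilde{E}_{n_m} = \emptyset$.) Everything else is routine: measurability of the level sets, the null-set pushforward argument for (c), and the fact that a subsequence of a subsequence still does the job.
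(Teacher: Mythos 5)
Your proof is correct, and it departs from the paper's argument in a genuine and instructive way. The paper first passes to a pointwise a.e.\ convergent subsequence (via the $\sum \|h_{k_i}-h_{k_{i+1}}\|_{L^1}<\infty$ trick), then applies Egoroff's theorem to obtain, for each $m$, an open bad set $\widetilde E_m$ of measure at most $1/m$ off which the entire subsequence converges uniformly, and finally picks one index $i_m$ per $m$ so that the uniform discrepancy drops below $1/m$; relabeling gives the statement. You instead skip Egoroff and pointwise convergence entirely: starting only from the $L^1$ rate $\varepsilon_m := \|h_{n_m}-h_\infty\|_{L^1(Q)}$, you balance $\alpha_{n_m}:=\sqrt{\varepsilon_m}$ against Chebyshev's inequality so that the superlevel set $\widetilde E_{n_m} := \{|h_{n_m}-h(x_0,y_0)| > \alpha_{n_m}\}$ has measure at most $\varepsilon_m/\alpha_{n_m} = \sqrt{\varepsilon_m}$, with (b) automatic off this set. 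This is a more elementary and fully quantitative argument---it trades the soft qualitative statement of Egoroff for an explicit $\sqrt{\varepsilon_m}$ rate, and it constructs Borel level sets directly rather than the open sets Egoroff yields (either is acceptable for the proposition). Your treatment of (c) coincides with the paper's. The only points of care---Borel measurability of the level sets, the degenerate case $\varepsilon_m=0$, and the a.e.\ preservation under the affine rescaling for (c)---are all handled correctly.
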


\begin{proof}[Proof of (a)-(b).]

By Remark~\ref{cor:blow-up}, $h_k \rightarrow h_\infty=h(x_0,y_0)$ strongly in $L^1(Q)$,
i.e.\ $||h_k - h_\infty||_{L^1(Q)} \rightarrow 0$.
It follows that a subsequence $h_{k_i}$ converges pointwise to $h_\infty$ almost
everywhere on $Q$; for example, choosing
$||h_{k_i} - h_{k_{i+1}}||_{L^1(Q)} \leq \frac{1}{2^i}$
is known to assure this \cite[Theorem 2.7]{LL01}.
By Egoroff's Theorem, for any natural number $m$, there exists an open subset $\widetilde{E}_m \subset Q$ such that $0 \leq \Leb^{2d}[\widetilde{E}_m] \leq \frac{1}{m}$ and $||h_{k_i} - h_\infty||_{L^{\infty}(\widetilde{F}_m)} \rightarrow 0$ as $i \rightarrow \infty$, where $\widetilde{F}_m:=Q \setminus \widetilde{E}_m$. Hence, for $i=i_m$ large enough,
\begin{equation}\label{eqn:unif_cnv}
||h_{k_{i_m}} - h_\infty||_{L^{\infty}(\widetilde{F}_m)} < \frac{1}{m}.
\end{equation}

Note that without loss of generality we can assume that $k_{i_1} < k_{i_2} < k_{i_3} < \cdots $.
Let $N_0:=\{k_{i_m}\, | \, m\in \mathbb{N} \}$. Relabeling indices by $n\in N_0$: $n:=k_{i_m}$, $\widetilde{E}_n:=\widetilde{E}_m$, and letting $\alpha_n:=\frac{1}{m}$, the above equation becomes, for all $n\in N_0$,
\begin{equation}\label{eqn:unif_cnv2}
||h_n - h_\infty||_{L^{\infty}(\widetilde{F}_n)} < \alpha_n.
\end{equation}

{\em Proof of (c).} For almost every $(x,y) \in Q$ and all $n \in \mathbb{N}$ we have by (b) of Claim~\ref{claim:first}: $h_n(x,y)=h(\frac{1}{n}x+x_0, \frac{1}{n}y+y_0) \leq \overline{h}(\frac{1}{n}x+x_0, \frac{1}{n}y+y_0)\leq \|\overline{h}\|_{L^\infty(Q_1)}$.
\end{proof}

We also need a similar but more delicate result concerning convergence
of the marginals of $h_n$.
Recall that $Q=[-\frac{1}{2},\frac{1}{2}]^{d} \times [-\frac{1}{2},\frac{1}{2}]^{d}$.

\begin{propn}\label{propn:second}
Let $h_n$ be the blow-up sequence of $0 \leq h \in L^1\cap L^\infty(\X \times \Y)$ at a Lebesgue point $(x_0,y_0)$ and let $f_n:=(h_n)_X$ and $g_n:=(h_n)_Y$ be the corresponding marginals.
Taking $N_0$ (see Proposition~\ref{propn:first}) smaller if necessary yields a further subsequence indexed by $n \in N_0$,
with Borel subsets
$\widetilde{X}_n^{bad}, \widetilde{Y}_n^{bad} \subset
[-\frac{1}{2},\frac{1}{2}]^{d}$ and
$\widetilde{X}_n^{good}:= [-\frac{1}{2},\frac{1}{2}]^{d} \setminus \widetilde{X}_n^{bad}$
and
$\widetilde{Y}_n^{good}:= [-\frac{1}{2},\frac{1}{2}]^{d} \setminus \widetilde{Y}_n^{bad}$
such that,
\begin{enumerate}
\item[(a)] {$\lim\limits_{n \to \infty}{\Leb^{d}[\widetilde{X}_n^{bad}]} =0$ and  $\lim\limits_{n \to \infty}\Leb^{d}[\widetilde{Y}_n^{bad}]=0$;}
\item[(b)] {$\displaystyle \lim_{n \to \infty} \|f_n-h(x_0,y_0)\|_{L^\infty (\widetilde{X}_n^{good})} =0
=  \lim_{n \to \infty}
\| g_n- h(x_0,y_0) \|_{L^\infty (\widetilde{Y}_n^{good})}$;}
\item[(c)] {If $0 \leq h \leq \overline{h}$ as in Proposition~\ref{propn:first}, then $f_n\leq \|\overline{h}\|_{L^\infty(Q_1)}$ and $g_n\leq \|\overline{h}\|_{L^\infty(Q_1)}$
on $[-\frac{1}{2},\frac{1}{2}]^{d}$.}
\end{enumerate}
\end{propn}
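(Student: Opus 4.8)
The plan is to derive all three assertions from Proposition~\ref{propn:first} by a Fubini-and-Chebyshev slicing argument; in fact no genuine further subsequence is needed, so one may retain the index set $N_0$, the numbers $\alpha_n\to 0$, and the Borel sets $\widetilde E_n$, $\widetilde F_n=Q\setminus\widetilde E_n$ supplied by Proposition~\ref{propn:first} (applied to $h$, which is bounded). Write $a:=h(x_0,y_0)$ and $M:=\|h\|_{L^\infty(\X\times\Y)}$; then $0\le a\le M$, and $0\le h_n\le M$ a.e.\ on $Q$ by (b) of Claim~\ref{claim:first}. Assertion (c) is then immediate: under the extra hypothesis $h\le\overline h$, part (c) of Proposition~\ref{propn:first} gives $h_n\le\|\overline h\|_{L^\infty(Q_1)}$ a.e.\ on $Q$, and integrating in $y$ (resp.\ $x$) over the unit cube $[-\frac12,\frac12]^d$, whose volume is $1$, yields $f_n\le\|\overline h\|_{L^\infty(Q_1)}$ (resp.\ $g_n\le\|\overline h\|_{L^\infty(Q_1)}$).

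For (a) and (b), fix $n\in N_0$ and set $\rho_n(x):=\Leb^{d}\bigl[\{y\in[-\frac12,\frac12]^d:(x,y)\in\widetilde E_n\}\bigr]$, the measure of the $x$-slice of the small set $\widetilde E_n$. Tonelli's theorem makes $\rho_n$ measurable and gives $\int\rho_n(x)\,dx=\Leb^{2d}[\widetilde E_n]\le\alpha_n$, so by Chebyshev's inequality the set $\widetilde X_n^{bad}:=\{x:\rho_n(x)\ge\sqrt{\alpha_n}\}$ has $\Leb^{d}[\widetilde X_n^{bad}]\le\sqrt{\alpha_n}\to 0$; defining $\widetilde Y_n^{bad}$ symmetrically from the $y$-slices of $\widetilde E_n$ establishes (a). For (b), fix $x\in\widetilde X_n^{good}$ (avoiding a null set of $x$) and split $f_n(x)=\int_{G_x}h_n(x,y)\,dy+\int_{B_x}h_n(x,y)\,dy$, where $B_x$ is the $x$-slice of $\widetilde E_n$, so $\Leb^d[B_x]=\rho_n(x)<\sqrt{\alpha_n}$, and $G_x:=[-\frac12,\frac12]^d\setminus B_x$ is the $x$-slice of $\widetilde F_n$, so $\Leb^d[G_x]=1-\rho_n(x)$. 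On $G_x$ part (b) of Proposition~\ref{propn:first} gives $|h_n(x,y)-a|\le\alpha_n$, hence $\bigl|\int_{G_x}h_n(x,y)\,dy-a(1-\rho_n(x))\bigr|\le\alpha_n$; on $B_x$ one has $0\le\int_{B_x}h_n(x,y)\,dy\le M\rho_n(x)<M\sqrt{\alpha_n}$. Combining and using $a\,\rho_n(x)\le M\sqrt{\alpha_n}$ gives $|f_n(x)-a|\le\alpha_n+2M\sqrt{\alpha_n}$ for a.e.\ $x\in\widetilde X_n^{good}$, whence $\|f_n-a\|_{L^\infty(\widetilde X_n^{good})}\to 0$; the estimate for $g_n$ on $\widetilde Y_n^{good}$ is identical, proving (b).

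The only real content is the slicing step: trading the $2d$-dimensional smallness of $\widetilde E_n$ for the $d$-dimensional smallness of the set of $x$ along which the slice of $\widetilde E_n$ fails to be tiny, for which Fubini and Chebyshev suffice. What remains is routine: careful bookkeeping of the ``almost every'' qualifiers ($f_n$ itself is defined only a.e.\ by Fubini, and the $L^\infty$ bounds of Proposition~\ref{propn:first}(b),(c) on the $2d$-set $\widetilde F_n$ must be read sliceswise for a.e.\ $x$), and noting that since we have produced explicit bad sets with $\Leb^d[\widetilde X_n^{bad}],\Leb^d[\widetilde Y_n^{bad}]\to 0$ and an explicit rate $\alpha_n+2M\sqrt{\alpha_n}$, no shrinking of $N_0$ is actually needed here (the leeway in the statement would instead accommodate a softer Egoroff argument applied directly to the marginals $f_n,g_n$, whose $L^1(Q)$-convergence to $a$ follows from $\|f_n-a\|_{L^1}\le\|h_n-a\|_{L^1}\to 0$).
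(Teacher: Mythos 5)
Your proof is correct, but it follows a genuinely different route from the paper's. The paper re-runs the Lieb--Loss rapidly-converging-subsequence plus Egoroff argument a second time, now applied directly to the marginals $f_n,g_n$ (whose $L^1$ convergence to $h(x_0,y_0)$ follows from $\|h_n-h_\infty\|_{L^1(Q)}\to 0$ since integrating out one block of variables is an $L^1$ contraction); this is why the statement allows shrinking $N_0$ again. You instead reuse the $2d$-dimensional bad set $\widetilde E_n$ already produced by Proposition~\ref{propn:first} and trade its smallness for smallness of a $d$-dimensional set of bad slices via Tonelli and Chebyshev, then estimate $f_n$ on each good slice by splitting the $y$-integral into a good part (uniformly close to $h(x_0,y_0)$, by Proposition~\ref{propn:first}(b)) and a thin bad part (controlled by $\|h\|_{L^\infty}$). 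Your slicing argument avoids any further subsequence extraction and yields an explicit rate $\alpha_n+2\|h\|_{L^\infty}\sqrt{\alpha_n}$, which is a small gain; the price is that the bad-slice estimate relies on the $L^\infty$ bound on $h$, which the hypothesis supplies but the paper's Egoroff-on-marginals route does not need. You handle the Fubini-level ``a.e.'' bookkeeping correctly (reading the essential-sup bound on $\widetilde F_n$ sliceswise for a.e.\ $x$, and noting $f_n$ is only defined a.e.). Both proofs are sound; yours is more quantitative, the paper's is more uniform with the proof technique of Proposition~\ref{propn:first}.
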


\begin{proof}[Proof of (a)-(b)]
Let us start with the subsequence $h_n$ from Proposition~\ref{propn:first}.
Its marginals $f_n$ and $f_\infty$ are given by
$f_n(x):=\int_{[-\frac{1}{2},\frac{1}{2}]^{d}} h_n(x,y)dy$ and
$f_\infty(x):=\int_{[-\frac{1}{2},\frac{1}{2}]^{d}} h_\infty(x,y)dy=h(x_0,y_0)$.
The marginals $g_n$ and $g_\infty$ are defined similarly.

By (a) of Claim~\ref{claim:first}, $||h_k - h_\infty||_{L^1(Q)} \rightarrow 0$. It follows that $||f_k - f_\infty||_{L^1([-\frac{1}{2},\frac{1}{2}]^{d})} \rightarrow 0$ and $||g_k - g_\infty||_{L^1([-\frac{1}{2},\frac{1}{2}]^{d})} \rightarrow 0$. Let $f_{k_i}$ and $g_{k_i}$ be subsequences satisfying $||f_{k_i} - f_{k_{i+1}}||_{L^1([-\frac{1}{2},\frac{1}{2}]^{d})} \leq \frac{1}{2^i}$ and
$||g_{k_i} - g_{k_{i+1}}||_{L^1([-\frac{1}{2},\frac{1}{2}]^{d})} \leq \frac{1}{2^i}$.

As in the proof of Proposition~\ref{propn:first}, Theorem 2.7 of~\cite{LL01} and
Egoroff's Theorem imply existence of open subsets
$\widetilde{X}^{bad}_m, \widetilde{Y}^{bad}_m \subset [-\frac{1}{2},\frac{1}{2}]^{d}$
($m\in \mathbb{N}$) satisfying
$0 \leq \Leb^{2d}[\widetilde{X}^{bad}_m], \Leb^{2d}[\widetilde{Y}^{bad}_m]
\leq \frac{1}{m}$, such that for $i=i_m$ large enough,
$||f_{k_{i_m}} - f_\infty||_{L^{\infty}(\widetilde{X}^{good}_m)},
||g_{k_{i_m}} - g_\infty||_{L^{\infty}(\widetilde{Y}^{good}_m)}< \frac{1}{m}.$
By relabeling indices, as in the proof of Proposition~\ref{propn:first}, we get for all
$n$ in an index set $N_0$:
$0 \leq \Leb^{2d}[\widetilde{X}^{bad}_n],\Leb^{2d}[\widetilde{Y}^{bad}_n]  \leq \alpha_n$
and
$||f_n - f_\infty||_{L^{\infty}(\widetilde{X}^{good}_n)},
||g_n - g_\infty||_{L^{\infty}(\widetilde{Y}^{good}_n)}  < \alpha_n.$
(a) and (b) follow since $\alpha_n \rightarrow 0$ as $n \rightarrow \infty$.

{\em Proof of (c).} Follows immediately from (c) of
Proposition~\ref{propn:first} and the formula
$f_n(x)=\int_{[-\frac{1}{2},\frac{1}{2}]^{d}} h_n(x,y) dy$.
\end{proof}

\section{Optimality is inherited by blow-up sequence}\label{section:inherited}

When $h$ is optimal among densities which share its marginals and which are dominated by
$\overline{h}$, i.e. $h \in \underset{k\in\Gamma(h_X,h_Y)^{\overline{h}}}{\mbox{\textnormal{argmin}}} I_c(k)$, we show that $h_n$  is (almost) optimal among densities which share its marginals and which are dominated by $\overline{h}_n$, i.e.
$h_n \in \underset{k\in\Gamma((h_n)_X,(h_n)_Y)^{\overline{h}_n}}
{\mbox{\textnormal{argmin}}} I_{\widetilde{c}} (k)$.\\

We first record what conditions $(C2)-(C3)$ of subsection~\ref{assumptions:cost} on the cost imply about the Taylor expansion of $c$. Suppose the first and second derivatives of $c(x,y)$ exist at $(x_0, y_0)$ and
consider the $2^{nd}$-order Taylor expansion of $c$ near $(x_0,y_0)$:
\begin{eqnarray}\label{Taylor}
c(x_0+\frac{x}{n}, y_0+\frac{y}{n}) &=& c(x_0,y_0)+\sum^{d}_{i=1}\frac{\partial c}{\partial x_i}(x_0,y_0)\frac{x_i}{n} + \sum^{d}_{i=1}\frac{\partial c}{\partial y_i}(x_0,y_0)\frac{y_i}{n}\nonumber\\
&+& \frac{1}{n^2}\{\frac{1}{2}x^T D^2_{xx}c(x_0,y_0)x +\frac{1}{2}y^T D^2_{yy}c(x_0,y_0)y\\
&+& x^T D^2_{xy}c(x_0,y_0)y+ R_2(\frac{x}{n},\frac{y}{n})\}. \nonumber
\end{eqnarray}

Here $R_2$ is $n^2$ times the $2^{nd}$-order Lagrange remainder $R'_2(\frac{x}{n},\frac{y}{n})$ which satisfies $\|R'_2(\frac{x}{n},\frac{y}{n})\|_{L^\infty(Q)}=o(\frac{1}{n^2})$ (e.g. see~\cite[Theorem 19.1]{Spivak} for the $1$-dimensional case). Hence $\|R_2(\frac{x}{n},\frac{y}{n})\|_{L^{\infty}(Q)}=o(1)$ as $n\rightarrow \infty$.\\

When $D^2_{xy}c(x_0,y_0)$ is non-degenerate, changing the $y$ coordinates by $y_{new}=D^2_{xy}c(x_0,y_0) y_{old}$ gives, without loss of generality, that $D^2_{xy}c(x_0,y_0)= I$. Hence without loss of generality we can assume that $x^T D^2_{xy}c(x_0,y_0)y$, the mixed $2^{nd}$-order term in equation (\ref{Taylor}), is equal to $\widetilde{c}(x,y):= x \cdot y$. In other words, after an appropriate change of coordinates equation (\ref{Taylor}) assumes the form:

\begin{eqnarray}\label{Taylor_simplified}
& &c(x_0+\frac{x}{n}, y_0+\frac{y}{n}) =  \mbox{ constant term } + \mbox{ terms involving $x$ alone}\\
& & + \mbox{ terms involving $y$ alone} +  \frac{1}{n^2}\{ \widetilde{c}(x,y) + \widetilde{R}_2(\frac{x}{n},\frac{y}{n})\}. \nonumber
\end{eqnarray}

For $\widetilde{c}_n(x,y):=\widetilde{c}(x,y)+ \widetilde{R}_2(\frac{x}{n},\frac{y}{n})$ let $I_{\widetilde{c}_n}(k)$ denote
$\int_Q \widetilde{c}_n(x,y) k(x,y)$, and for $\widetilde{c}(x,y)= x \cdot y$ let $I_{\widetilde{c}}(k)$ denote $\int_Q \widetilde{c}(x,y) k(x,y)$.
Note that $I_{\widetilde{c}_n}(k)=\int_{Q}\widetilde{c}k + \int_{Q}\widetilde{R}_2 k$ and $|\int_{Q}\widetilde{R}_2 k| \leq \int_{Q}|\widetilde{R}_2| |k| \leq \|\widetilde{R}_2\|_{L^{\infty}(Q)} \|k\|_{L^1(Q)}$. Hence given a fixed constant $M > 0$, we have for all $k \in L^1(Q)$ whose total mass $\|k\|_{L^1(Q)}\leq M$,
\begin{equation}\label{eqn:o(1)}
I_{\widetilde{c}_n}(k)= I_{\widetilde{c}}(k) + o(1).
\end{equation}

\begin{rmk}
Note that when the cost satisfies $(C2)-(C3)$ of subsection~\ref{assumptions:cost}, for every $(x_0,y_0)\in \X \times \Y\setminus Z$ the first and second derivatives of $c(x,y)$ exist at $(x_0, y_0)$ and $D^2_{xy}c(x_0,y_0)$ is non-degenerate.
\end{rmk}

In~\cite[Theorem 4.6]{Villani2} it is shown that unconstrained optimality is inherited by restriction to (measurable) subsets: if the restricted plan is not optimal, then it can be improved, but any improvement in the restricted plan carries over to an improvement in the original optimal plan, which is not possible. In the constrained context, optimality is not necessarily inherited by an arbitrary restriction. To see this, recall example~\ref{example}, where the optimal constrained solution is given by $h$ in equation (\ref{eqn:example}). Note that the restriction of $h$ to $[0,\frac{1}{4}] \times [\frac{1}{4},\frac{1}{2}] \cup [\frac{1}{4},\frac{1}{2}] \times [0,\frac{1}{4}]$ is not optimal: restricting $h$ to $[0,\frac{1}{4}] \times [0,\frac{1}{4}] \cup [\frac{1}{4},\frac{1}{2}] \times [\frac{1}{4},\frac{1}{2}]$ has the same marginals but lower cost.\\

The following lemma says that in the constrained case, optimality is inherited when the restriction is to a {\it rectangular set}. This is used in the proof of Proposition~\ref{propn:third}.

\begin{lem}\label{lem:restriction} Let $0 \leq h\in L^1_{c} (\X \times \Y) $ be optimal among densities in $\Gamma (h_X, h_Y)^{\overline{h}}$ with respect to a cost function $c$. Consider a rectangular neighbourhood $A \times B \subset \X \times \Y$ where $A$ and $B$ are Borel subsets of $\X$, and let $\widetilde{h}$ denote $h |_{A \times B}$, the restriction of $h$ to $A \times B$. Then $\widetilde{h}$ is optimal among densities in $\Gamma (\widetilde{h}_X, \widetilde{h}_Y)^{\overline{h}}$ with respect to the same cost $c$.
\end{lem}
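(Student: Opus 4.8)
The plan is to argue by contradiction, exactly paralleling the unconstrained argument of \cite[Theorem 4.6]{Villani2} but taking care that the competing density we build respects the domination by $\overline{h}$, which is where the rectangular structure $A\times B$ is essential. Suppose $\widetilde h=h|_{A\times B}$ is not optimal in $\Gamma(\widetilde h_X,\widetilde h_Y)^{\overline h}$. Then there exists $k\in\Gamma(\widetilde h_X,\widetilde h_Y)^{\overline h}$, supported in $A\times B$ (since its marginals $\widetilde h_X$ and $\widetilde h_Y$ are supported in $A$ and $B$ respectively, any $k$ with these marginals is automatically supported in $A\times B$), with $I_c(k)<I_c(\widetilde h)$. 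Define the competitor
\begin{equation*}
h':=h-\widetilde h+k = h\cdot 1_{(\X\times\Y)\setminus(A\times B)}+k.
\end{equation*}
I would then check three things about $h'$: it is non-negative; it is dominated by $\overline h$ a.e.; and it has the same marginals as $h$. The first is clear off $A\times B$ and follows from $k\ge 0$ on $A\times B$. For domination: off $A\times B$ we have $h'=h\le\overline h$, and on $A\times B$ we have $h'=k\le\overline h$ by hypothesis on $k$ — here it is crucial that the two pieces $h\cdot 1_{(\X\times\Y)\setminus(A\times B)}$ and $k$ have disjoint supports, so no summation of densities occurs and domination is not violated. For the marginals: integrating $h'$ over $y$ gives $h_X(x)-\widetilde h_X(x)+k_X(x)$; since $k_X=\widetilde h_X$ by assumption, this equals $h_X(x)$, and symmetrically $h'_Y=h_Y$.

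Having established $h'\in\Gamma(h_X,h_Y)^{\overline h}$, I would compute the cost difference. Because $h'$ and $h$ differ only on $A\times B$, where $h=\widetilde h$ and $h'=k$,
\begin{equation*}
I_c(h')-I_c(h)=\int_{A\times B}c(x,y)\big(k(x,y)-\widetilde h(x,y)\big)\,dx\,dy = I_c(k)-I_c(\widetilde h)<0,
\end{equation*}
contradicting optimality of $h$ in $\Gamma(h_X,h_Y)^{\overline h}$. Hence $\widetilde h$ is optimal, as claimed. One should also note $h'\in L^1_c$: it has compact support since both $h$ and $k$ do ($k$ is dominated by the compactly supported $\overline h$), and it is integrable as a difference/sum of integrable functions.

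The only genuinely delicate point — and the reason the rectangular hypothesis cannot be dropped — is the verification that $h'\le\overline h$ almost everywhere. If $A\times B$ were replaced by an arbitrary Borel set $E\subset\X\times\Y$, the marginals of $\widetilde h=h|_E$ would no longer force a competitor $k\in\Gamma(\widetilde h_X,\widetilde h_Y)$ to be supported in $E$; mass could be moved to $(x,y)\notin E$ where $h(x,y)$ may already be positive, so that $h'(x,y)=h(x,y)+k(x,y)$ could exceed $\overline h(x,y)$. The product structure $A\times B$ is exactly what guarantees $\operatorname{spt}\widetilde h_X\times\operatorname{spt}\widetilde h_Y\subset A\times B$, confining $k$ and keeping the gluing disjoint. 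I expect the write-up to be short; the main thing to spell out carefully is this support/domination bookkeeping, and to remark (as the paper already does via the Example~\ref{example} discussion preceding the lemma) that for non-rectangular restrictions the conclusion genuinely fails.
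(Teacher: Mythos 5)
Your argument is correct and is essentially identical to the paper's own proof: both build the competitor $h-\widetilde h+k$ (the paper writes $\widetilde h'$ for your $k$), observe that any $k\in\Gamma(\widetilde h_X,\widetilde h_Y)^{\overline h}$ is necessarily supported in $A\times B$ because of the rectangular structure, and then verify domination by $\overline h$ and matching marginals to contradict optimality of $h$. The paper is a bit terser on the bookkeeping you spell out, but the decomposition and the key role of the product set are the same.
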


\begin{proof}
If $\widetilde{h}$ is not optimal, then there exists a plan $\widetilde{h}'\in \Gamma (\widetilde{h}_X, \widetilde{h}_Y)^{\overline{h}}$ improving $\widetilde{h}$. Note that $\widetilde{h}$ and $\widetilde{h}'$ are {\it both supported on the rectangular neighbourhood} $A \times B$. Now consider the plan $h - \widetilde{h} + \widetilde{h}'$ which improves $h$. Since $\widetilde{h}$ and $\widetilde{h}'$ have the same marginals, $h - \widetilde{h} + \widetilde{h}' \in \Gamma (h_X, h_Y)$. Note that
$$
h - \widetilde{h} + \widetilde{h}' = \left\{
\begin{array}{cc}
\widetilde{h}' &  \mbox{ on } A \times B \\
h &  \mbox{  otherwise},  \end{array}\right.
$$
and that $h - \widetilde{h} + \widetilde{h}' \leq \overline{h}$. It follows that the improved plan $h - \widetilde{h} + \widetilde{h}' \in \Gamma (h_X, h_Y)^{\overline{h}}$, contradicting optimality of $h$.
\end{proof}

\begin{rmk}\label{more_examples}
By the above lemma, restricting the optimal density of example~\ref{example} to rectangular sets gives more examples of optimal densities.
\end{rmk}

\begin{propn}\label{propn:third}
Let the cost $c(x,y)$ satisfy conditions $(C1)-(C3)$ of subsection~\ref{assumptions:cost}.
Let $0 \leq \overline{h}\in L^{\infty}(\X \times \Y)$ have compact support and suppose that $\Gamma(f,g)^{\overline{h}}\neq \emptyset$. Make a linear change of coordinates if necessary so that (\ref{Taylor_simplified}) holds.
Take $h\in\Gamma(f,g)^{\overline{h}}$ and let $(x_0,y_0)\in \X\times \Y \setminus Z$ be a Lebesgue point of $h$. Consider the blow-up sequence $h_n$ of $h$ at $(x_0,y_0)$. Then $h$ $c$-optimal implies that $h_n$ is $\widetilde{c}_n$-optimal,
where $\widetilde{c}_n(x,y) = \widetilde c(x,y) + \widetilde R_2(\frac{x}{n},\frac{y}{n})$:
\begin{eqnarray*}
h \in \underset{k\in\Gamma(h_X,h_Y)^{\overline{h}}}{\mbox{\textnormal{argmin}}} I_c(k) \Longrightarrow h_n \in \underset{ k\in\Gamma((h_n)_X,(h_n)_Y)^{\overline{h}_n}}{\mbox{\textnormal{argmin}}} I_{\widetilde{c}_n}(k).
\end{eqnarray*}
\end{propn}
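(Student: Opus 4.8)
The plan is to argue by contradiction at the level of the blow-up cube $Q$, transporting any hypothetical improvement of $h_n$ back up to an improvement of $h$ on the small cube $Q_n(x_0,y_0)$, and then invoking the rectangular-restriction lemma (Lemma~\ref{lem:restriction}) to derive a contradiction with the $c$-optimality of $h$. Concretely, suppose $h_n$ is not $\widetilde c_n$-optimal in $\Gamma((h_n)_X,(h_n)_Y)^{\overline h_n}$, so there is a competitor $k\in\Gamma((h_n)_X,(h_n)_Y)^{\overline h_n}$ with $I_{\widetilde c_n}(k) < I_{\widetilde c_n}(h_n)$. Pull $k$ back to $Q_n$ via the inverse dilation $\varphi_n^{-1}$ (with the appropriate Jacobian factor $n^{2d}$) to obtain a density $\check k$ supported on $Q_n$ with the same marginals as $h|_{Q_n}$ and dominated by $\overline h$ there.

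The key arithmetic step is that the affine change of coordinates has already arranged, via \eqref{Taylor_simplified}, that on $Q_n$ the cost $c$ equals ``constant $+$ function of $x$ alone $+$ function of $y$ alone $+\frac{1}{n^2}\widetilde c_n$''. Because $\check k$ and $h|_{Q_n}$ share the same $X$- and $Y$-marginals, all the terms of $c$ except the mixed term integrate to the \emph{same} value against $\check k$ and against $h|_{Q_n}$; hence the difference in $c$-cost between the two restrictions equals $\frac{1}{n^2}$ times the difference in $\widetilde c_n$-cost between $k$ and $h_n$ (the $\Leb^{2d}[Q_n]=n^{-2d}$ Jacobian from the pullback and the $n^{-2d}$ normalization in the definition of $h_n$ cancel in the comparison, up to the clean bookkeeping one must do carefully). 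Thus $I_c(\check k) < I_c(h|_{Q_n})$ strictly. Now apply Lemma~\ref{lem:restriction} with $A\times B = Q_n(x_0,y_0)$ (a genuine rectangular set): optimality of $h$ is inherited by $\widetilde h := h|_{Q_n}$, so $\widetilde h$ is optimal in $\Gamma(\widetilde h_X,\widetilde h_Y)^{\overline h}$; but $\check k$ lies in that same class and has strictly smaller cost, a contradiction.

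I would organize the write-up in three short steps: (i) set up the pullback map $k \mapsto \check k$ and verify it preserves the marginal and domination constraints, using Claim~\ref{claim:first}(b) and the change-of-variables computation already performed in the proof of Claim~\ref{claim:first}(a); (ii) carry out the cost comparison, quoting \eqref{Taylor_simplified} and cancelling the $x$-only and $y$-only terms against the shared marginals, to reduce $\Delta I_c$ on $Q_n$ to a positive multiple of $\Delta I_{\widetilde c_n}$ on $Q$; (iii) invoke Lemma~\ref{lem:restriction} to close the contradiction. One should also note at the outset that $(x_0,y_0)\notin Z$ guarantees (via the Remark following \eqref{eqn:o(1)}) that the first and second derivatives of $c$ exist at $(x_0,y_0)$ and $D^2_{xy}c(x_0,y_0)$ is invertible, so the reduction to \eqref{Taylor_simplified} is legitimate.

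The main obstacle I anticipate is purely bookkeeping rather than conceptual: keeping the two competing factors of $n^{-2d}$ straight — one coming from $|\det D\varphi_n|$ in the change of variables, the other from the definition $h_n = h\circ\varphi_n$ (which is \emph{not} mass-preserving in the probabilist's sense, since $\|h_n\|_{L^1(Q)} = n^{2d}\|h\|_{L^1(Q_n)}$) — and making sure the marginals of the pulled-back $k$ really do match those of $h|_{Q_n}$ exactly, not merely approximately. A secondary subtlety is that $\widetilde c_n$ rather than $\widetilde c$ appears throughout: but since $\widetilde c_n$ is literally the mixed second-order term of the exact Taylor expansion \eqref{Taylor} (Lagrange form) rescaled, the identity ``$c$ on $Q_n$ $=$ separable part $+\,n^{-2d}\widetilde c_n\circ\varphi_n^{-1}$'' holds \emph{exactly}, with no error term, so the comparison is exact and no appeal to \eqref{eqn:o(1)} is needed here — the $o(1)$ estimates will only be needed later, when passing $n\to\infty$.
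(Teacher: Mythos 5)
Your proposal is correct and matches the paper's argument in substance: the paper also decomposes blow-up into restriction (handled by Lemma~\ref{lem:restriction} with $A\times B=Q_n$) followed by dilation (handled by the change-of-variables with $|\det D\varphi_n|=n^{-2d}$), and then observes that the constant, $x$-only, and $y$-only terms of \eqref{Taylor_simplified} integrate identically against any competitor with the fixed marginals, leaving exactly $\widetilde c_n$ to govern the argmin. The only difference is presentational — you phrase it as a contradiction via a hypothetical improving competitor $k$, while the paper writes it as a direct chain of ``argmin $=$ argmin'' identities — and your side remarks (the pullback $\check k = k\circ\varphi_n^{-1}$ needing no extra Jacobian normalization to match marginals, and the Taylor identity being exact so \eqref{eqn:o(1)} is not yet needed) are accurate.
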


\begin{proof} Let $Q_n=Q_n(x_0,y_0)$ and
consider the blow-up process as being done in two steps: restriction ($h'_n:=h|_{Q_n}$) and dilation ($h_n:=h'_n \circ \varphi_n$).
In the restriction stage $h$ is restricted to the rectangular neighbourhood $Q_n$, hence by
Lemma~\ref{lem:restriction} $h'_n$ is optimal:
\begin{eqnarray*}
h \in \underset{ {k\in\Gamma(h_X,h_Y)^{\overline{h}}}}{\mbox{argmin}} \int_{\X \times \Y} c(x,y)k(x,y) \Longrightarrow \\
{h}'_n \in \underset{k\in\Gamma((h'_n)_X,(h'_n)_Y)^{\overline{h}'_n}}{\mbox{argmin}} \int_{Q_n} c(x,y)k(x,y).
\end{eqnarray*}
In the dilation stage $h'_n$ is composed with the linear map $\varphi_n:Q\rightarrow Q_n: (x,y) \mapsto (\frac{1}{n}x+x_0,\frac{1}{n}y+y_0)$. Note that $det D\varphi_n(x,y)=\frac{1}{n^{2d}}$.
By the change of variables formula,
\begin{eqnarray*}
& & \int_{Q_n} c(x,y)h'_n(x,y) = \int_{Q_n} c(x,y) (h_n \circ \varphi_n^{-1})(x,y)\\
&=&\int_Q c(\varphi_n(x,y)) h_n(x,y)|det D\varphi_n(x,y)| \\
&=& \frac{1}{n^{2d}} \int_Q c(x_0+\frac{x}{n},y_0+\frac{y}{n})h_n(x,y),
\end{eqnarray*}
and so,
\begin{eqnarray*}
& &
\underset{ k\in\Gamma((h'_n)_X, (h'_n)_Y)^{\overline{h}'_n}}{\mbox{\textnormal{argmin}}} \int_{Q_n} c(x,y)k(x,y)\\
&=& \underset{ k\in\Gamma((h_n)_X,(h_n)_Y)^{\overline{h}_n} }{\mbox{\textnormal{argmin}}}  \frac{1}{n^{2d}} \int_Q c(x_0+\frac{x}{n},y_0+\frac{y}{n})k(x,y)\\
&=& \underset{k\in\Gamma((h_n)_X,(h_n)_Y)^{\overline{h}_n}}{\mbox{\textnormal{argmin}}} \frac{1}{n^{2(d+1)}} \int_Q \{\widetilde{c}(x, y)+ \widetilde{R}_2(\frac{x}{n},\frac{y}{n})\} k(x,y)\\
&=& \underset{k\in\Gamma((h_n)_X,(h_n)_Y)^{\overline{h}_n}}{\mbox{argmin}} \int_Q \widetilde{c}_n(x,y) k(x,y).
\end{eqnarray*}

For the second equality above, note that those terms of the Taylor expansion (\ref{Taylor_simplified}) which are constant, are functions of $x$ alone, or are functions of $y$ alone give the same value when integrated against any density $k$ in $\Gamma((h_n)_X,(h_n)_Y)$ since the marginals are fixed. Hence for the variational problem at hand only the mixed {$2^{nd}$}-order terms in the Taylor series, namely $\widetilde{c}(x, y)$, and the remainder, $R_2(\frac{x}{n},\frac{y}{n})$, matter. For the last equality above, recall that $\mbox{argmin}\int \cdot=\mbox{argmin} \frac{1}{m}\int\cdot$ for any positive constant $m$.

\end{proof}

\section{Is Optimality inherited by blow-ups?}

It is natural to ask whether the blow-up $h_\infty$ of an optimal $h$ is also optimal (among densities which share its marginals and which are dominated by the blow-up $\overline{h}_\infty$ of $\overline{h}$).
For our purposes we do not need to have a complete answer to this question. Instead, we derive a necessary condition for $h_\infty$ to be (almost) optimal. In section~\ref{section:implies} we show this condition is satisfied when $h$ is optimal.

\begin{lem}\label{lem:necc_cond}
Let the cost $c(x,y)$ satisfy conditions $(C2)-(C3)$ of subsection~\ref{assumptions:cost}.
Let $0 \leq \overline{h}\in L^{\infty}(\X \times \Y)$ have compact support and suppose that $\Gamma(\mu,\nu)^{\overline{h}}\neq \emptyset$.
Take $h\in\Gamma(f,g)^{\overline{h}}$ and let $(x_0,y_0)\in \X\times \Y \setminus Z$ be a common Lebesgue point of $h$ and $\overline{h}$. Let $h_{\infty}, \overline{h}_{\infty}\in L^1(Q)$  be the blow-ups of $h, \overline{h}$ at $(x_0, y_0)$. If $0 < h(x_0,y_0) < \overline{h}
(x_0,y_0)$ then $h_\infty\in \Gamma(f_\infty,g_\infty)^{\overline{h}_\infty}$ can be improved: for any $\delta$ which satisfies $0<\delta <\mbox{min}(h(x_0, y_0),\overline{h}(x_0, y_0)-h(x_0, y_0))$ there exists $h_\infty^\delta \in \Gamma(f_\infty,g_\infty)^{\overline{h}_\infty}$ such that $I_{\widetilde{c}}(h_\infty^\delta) < I_{\widetilde{c}}(h_\infty)$. Furthermore, $h(x_0, y_0)-\delta \leq h_\infty^\delta \leq h(x_0, y_0)+\delta$ on $Q$.
\end{lem}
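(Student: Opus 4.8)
The plan is to exhibit an explicit perturbation supported on $Q$ which preserves the marginals, respects the domination by $\overline{h}_\infty$, and strictly decreases the linear functional $I_{\widetilde c}$ associated to the model cost $\widetilde c(x,y)=x\cdot y$. Since the blow-ups are \emph{constant} on $Q$ by Remark~\ref{cor:blow-up}, namely $h_\infty\equiv h(x_0,y_0)=:a$ and $\overline{h}_\infty\equiv\overline{h}(x_0,y_0)=:b$ with $0<a<b$, this is exactly the ``locally constant densities'' situation anticipated in the Motivation, and the perturbation is the natural multi-dimensional analogue of the $2\times 2$ checkerboard swap of Example~\ref{example}.

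First I would set up the perturbation in the separated form $\Delta h(x,y)=u(x)\,v(y)$, built from a single ``$\pm$'' pattern in each factor. Concretely, fix a coordinate, say the first, split the cube $[-\tfrac12,\tfrac12]^d$ into its two halves $\{x_1<0\}$ and $\{x_1>0\}$, and let $\sigma(x):=\operatorname{sgn}(x_1)$; do the same in $y$. Then set $\Delta h(x,y):=-\,\delta\,\sigma(x)\,\sigma(y)$, so that $\Delta h=-\delta$ on the ``diagonal'' quadrants $\{x_1y_1>0\}$ and $\Delta h=+\delta$ on the ``off-diagonal'' quadrants $\{x_1y_1<0\}$, and put $h_\infty^\delta:=h_\infty+\Delta h = a+\Delta h$. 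Because $\int_{[-1/2,1/2]^d}\sigma(x)\,dx=0$, integrating $\Delta h$ in $y$ (or in $x$) gives $0$, so $h_\infty^\delta$ has the same marginals $f_\infty\equiv g_\infty\equiv a$ as $h_\infty$; this puts $h_\infty^\delta\in\Gamma(f_\infty,g_\infty)$. The pointwise bound is immediate from the choice of $\delta$: on $Q$ we have $a-\delta\le h_\infty^\delta\le a+\delta$, and $0<\delta<\min(a,b-a)$ forces $0\le h_\infty^\delta\le b=\overline{h}_\infty$ a.e., i.e.\ $h_\infty^\delta\in\Gamma(f_\infty,g_\infty)^{\overline{h}_\infty}$. (This also yields the final sentence of the statement.)

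Next I would compute the cost change. Since $h_\infty$ is constant and $\widetilde c(x,y)=x\cdot y=\sum_i x_iy_i$ integrates to $0$ against a constant over the symmetric cube, $I_{\widetilde c}(h_\infty)=0$, and
\[
I_{\widetilde c}(h_\infty^\delta)=\int_Q (x\cdot y)\,\Delta h(x,y)\,dx\,dy
=-\delta\int_Q (x\cdot y)\,\sigma(x)\sigma(y)\,dx\,dy.
\]
By Fubini the cross terms $\int x_i\sigma(x)\,dx\cdot\int y_j\,dy$ for $i\ne j$ vanish (the $y$-integral of $y_j$ over the symmetric cube is $0$, likewise the $x$-integral), so only $i=j=1$ survives, giving
\[
I_{\widetilde c}(h_\infty^\delta)=-\delta\Big(\int_{[-1/2,1/2]^d} x_1\,\operatorname{sgn}(x_1)\,dx\Big)^2
=-\delta\Big(\int_{[-1/2,1/2]^d}|x_1|\,dx\Big)^2<0,
\]
which is strictly negative since $\delta>0$ and $\int|x_1|\,dx=\tfrac14>0$ over the unit cube. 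Hence $I_{\widetilde c}(h_\infty^\delta)<0=I_{\widetilde c}(h_\infty)$, as claimed.

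There is no serious obstacle here: the only thing to be careful about is that all the reductions used to bring the cost into the model form $\widetilde c(x,y)=x\cdot y$ (Taylor expansion, dropping constant/$x$-only/$y$-only terms, and the linear change of $y$-coordinates normalizing $D^2_{xy}c(x_0,y_0)=I$) are already in place by the hypothesis $(x_0,y_0)\notin Z$, conditions $(C2)$–$(C3)$, and the coordinate normalization from Section~\ref{section:inherited}; and that the marginal-cancellation identity $\int\sigma=0$ is what simultaneously gives both the fixed-marginals property and the vanishing of the off-diagonal contributions to the cost. If one prefers a pattern adapted to an arbitrary non-degenerate $D^2_{xy}c$ without first normalizing it, the same computation with $\widetilde c(x,y)=x^T D^2_{xy}c(x_0,y_0)\,y$ works verbatim, producing $-\delta\,(\text{const})\cdot(D^2_{xy}c)_{11}\,(\int|x_1|)^2$, and one then picks the splitting coordinate so that the relevant entry of $D^2_{xy}c(x_0,y_0)$ is nonzero, which is possible precisely because that matrix is invertible.
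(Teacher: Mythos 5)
Your argument is correct and reaches the paper's conclusion by a cleaner, more explicit route. The paper follows the classical Gangbo--McCann perturbation: pick two points $(x_1,y_1),(x_2,y_2)$ with the monotonicity defect $\widetilde c(x_1,y_1)+\widetilde c(x_2,y_2)<\widetilde c(x_1,y_2)+\widetilde c(x_2,y_1)$, pass by continuity to small disjoint boxes $U_j\times V_j$, and set $\Delta h=\pm\delta$ on the resulting four small rectangles (zero elsewhere). You instead take the ``four rectangles'' to be the entire half-cube quadrants via $\Delta h=-\delta\,\mathrm{sgn}(x_1)\mathrm{sgn}(y_1)$, use the cube's symmetry to kill the marginals, and then compute the cost drop explicitly as $-\delta/16<0$ by Fubini, exploiting that $\widetilde c(x,y)=x\cdot y$ is a polynomial with separable monomials. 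Both establish the same pointwise bound $a-\delta\le h_\infty^\delta\le a+\delta$ and the same membership in $\Gamma(f_\infty,g_\infty)^{\overline h_\infty}$; what the paper's route buys is independence from the precise algebraic form of $\widetilde c$ (only continuity and a single monotonicity-violating pair are needed), whereas yours buys an explicit, computation-free verification once the model cost is fixed. One small caveat in your closing remark: invertibility of $D^2_{xy}c(x_0,y_0)$ does \emph{not} guarantee a nonzero diagonal entry (e.g.\ an antidiagonal permutation matrix), so if you skip the normalization you should split along possibly different coordinates $i$ in $x$ and $j$ in $y$ so that the surviving factor is $(D^2_{xy}c)_{ij}$ for some nonzero entry, and flip the sign of one of the $\mathrm{sgn}$ factors if needed to make the resulting cost change negative.
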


\begin{proof}
Suppose that $0<h(x_0,y_0)<\overline{h}(x_0,y_0)$ at $(x_0,y_0)$. By Corollary~\ref{cor:blow-up}, $h_\infty$ is equal to the constant function $r=h(x_0,y_0)$ almost everywhere on $Q$. Its marginals $f_\infty=(h_\infty)_X$ and $g_\infty=(h_\infty)_Y$ are both equal to $r$ almost everywhere on $[-\frac{1}{2},\frac{1}{2}]^d$. Also by Corollary~\ref{cor:blow-up}, $\overline{h}_\infty$ is equal to the constant function $R=\overline{h}(x_0,y_0)$ almost everywhere on $Q$. By our assumption $0 < r < R$.

We next recall a standard perturbation argument (e.g.~\cite[proof of Theorem 2.3]{GM}) to show that $r$ is not optimal among densities $k\in\Gamma(r,r)$ constrained by $R$, where optimality is measured against $\widetilde{c}(x,y)= \, x\cdot y$. Pick two points $(x_1,y_1)$ and $(x_2,y_2)$ in $Q$ such that $\widetilde{c}(x_1,y_1) + \widetilde{c}(x_2,y_2) < \widetilde{c}(x_1,y_2) + \widetilde{c}(x_2,y_1)$. Since $\widetilde{c}(x,y)$ is continuous, there exist (compact) neighbourhoods $U_j\subset [-\frac{1}{2},\frac{1}{2}]^d$ of $x_j$ and $V_j\subset [-\frac{1}{2},\frac{1}{2}]^d$ of $y_j$ such that $\widetilde{c}(u_1,v_1) + \widetilde{c}(u_2,v_2) < \widetilde{c}(u_1,v_2) + \widetilde{c}(u_2,v_1)$ whenever $u_j\in U_j$ and $v_j\in V_j$. It follows that $U_1 \cap U_2 \neq \emptyset$ and $V_1 \cap V_2 \neq \emptyset$. Take $0<\delta <\mbox{min}(r,R-r)$ and consider the density $\Delta h$ which is equal $\delta$ on $U_1\times V_1$, $U_2\times V_2$, is $-\delta$ on $U_1\times V_2$, $U_2\times V_1$ and is $0$ everywhere else. Note that $h_\infty=r$ and $h_\infty^\delta:= r+\Delta h$ have the same marginals,
and that $0< r-\delta \leq h_\infty^\delta \leq r + \delta < R$ by choice of $\delta$,
so $h_\infty^\delta\in \Gamma(f_\infty,g_\infty)^{\overline{h}_\infty}$.
By the choice of the points $(x_1,y_1)$ and $(x_2,y_2)$, $h_\infty^\delta=r+\Delta h$ has
lower cost than $h_\infty=r$: $I_{\widetilde{c}}(h_\infty^\delta) < I_{\widetilde{c}}(h_\infty)$.
\end{proof}

\begin{defn}\label{defn:geometrically_extreme} (Geometrically Extreme.) Let $\overline{h}$ be bounded.
A density $h$ in $\Gamma (f, g)^{\overline{h}}$ will be called {\it geometrically extreme} if there exists a ($\Leb^{2d}$-measurable) set $W\subset \mathbb{R}^{2d}$ such that $h(x,y)=\overline{h}(x,y)1_{W}(x,y)$ for almost every $(x,y)\in \mathbb{R}^{2d}$. Here  $1_{W}$ is the characteristic function of the set $W$.
\end{defn}

\begin{cor}\label{cor:geom_ext} (A necessary condition for optimality of $h_\infty$.)
Let the cost $c(x,y)$ satisfy conditions $(C1)-(C3)$ of subsection~\ref{assumptions:cost}.
Let $0 \leq \overline{h}\in L^{\infty}(\X \times \Y)$ have compact support and assume that $\Gamma(f,g)^{\overline{h}}\neq \emptyset$.
Take $h\in\Gamma(f,g)^{\overline{h}}$.
If $h_\infty$ is $\widetilde{c}$-optimal at almost every $(x_0,y_0)$, i.e. $ h_\infty \in \mbox{argmin}_{k\in\Gamma(f_\infty,g_\infty)^{\overline{h}_\infty}} I_{\widetilde{c}}(k)$, then $h$ is geometrically extreme.
\end{cor}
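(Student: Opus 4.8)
The plan is to argue by contrapositive: if $h$ is not geometrically extreme, then the set of Lebesgue points where the value of $h$ is strictly between $0$ and $\overline h$ has positive measure, and at any such point Lemma~\ref{lem:necc_cond} furnishes a competitor that strictly lowers the $\widetilde c$-cost of $h_\infty$, contradicting the hypothesized $\widetilde c$-optimality of $h_\infty$ at a.e.\ $(x_0,y_0)$.

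First I would unwind the definition of geometrically extreme. By Definition~\ref{defn:geometrically_extreme}, $h$ fails to be geometrically extreme precisely when there is no measurable $W$ with $h = \overline h\,1_W$ a.e.; equivalently, the set
\[
N := \{(x,y)\in\X\times\Y : 0 < h(x,y) < \overline{h}(x,y)\}
\]
has positive Lebesgue measure. (If $\Leb^{2d}[N]=0$, then a.e.\ point has $h\in\{0,\overline h\}$, and one can take $W := \{h>0\}$ up to a null set, so $h$ would be geometrically extreme.) Next I would intersect $N$ with the full-measure sets of common Lebesgue points of $h$ and of $\overline h$ (Lebesgue's theorem) and with $(\X\times\Y)\setminus Z$, which is also of full measure since $Z$ is $\Leb^{2d}$-negligible by (C2). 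The result is still a set of positive measure, so in particular it is nonempty; pick any $(x_0,y_0)$ in it.

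Now apply Lemma~\ref{lem:necc_cond} at $(x_0,y_0)$: since $0 < h(x_0,y_0) < \overline h(x_0,y_0)$ and $(x_0,y_0)$ is a common Lebesgue point of $h,\overline h$ lying outside $Z$, the lemma produces, for suitable $\delta>0$, a density $h_\infty^\delta \in \Gamma(f_\infty,g_\infty)^{\overline h_\infty}$ with $I_{\widetilde c}(h_\infty^\delta) < I_{\widetilde c}(h_\infty)$. This directly contradicts the assumption that $h_\infty$ is $\widetilde c$-optimal in $\Gamma(f_\infty,g_\infty)^{\overline h_\infty}$ at almost every $(x_0,y_0)$ — indeed it is violated at every point of a positive-measure set. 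Hence $\Leb^{2d}[N]=0$ and $h$ is geometrically extreme, taking $W$ to be (a measurable representative of) $\{(x,y) : h(x,y)>0\}$, on which $h=\overline h$ a.e.

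I do not expect a serious obstacle here; the work has all been front-loaded into Lemma~\ref{lem:necc_cond}. The only point requiring mild care is the bookkeeping in the first step — verifying that "not geometrically extreme" is genuinely equivalent to $\Leb^{2d}[N]>0$, and that removing the null sets $Z$ and the non-Lebesgue points leaves $N$ with positive measure (hence nonempty) so that the single-point hypothesis of Lemma~\ref{lem:necc_cond} can be invoked. Everything else is an immediate logical contradiction with the "almost every $(x_0,y_0)$" optimality hypothesis.
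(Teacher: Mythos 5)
Your proposal is correct and is essentially the contrapositive rephrasing of the paper's own argument: both proofs rest entirely on Lemma~\ref{lem:necc_cond}, intersect away the null sets ($Z$, non-Lebesgue points, and the exceptional set in the a.e.\ optimality hypothesis), and conclude that at the remaining points $h$ must equal $0$ or $\overline h$. The paper states this directly on a full-measure good set while you phrase it by contradiction from a positive-measure bad set, but the substance is identical.
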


\begin{proof}
Let $N:=\{(x, y)\in\X\times\Y\setminus Z\, |\, (x, y)\mbox{ is a common Lebesgue}$
$\mbox{point of }h \mbox{ and } \overline{h}  \}$, where $Z$ is the Lebesgue negligible set of subsection~\ref{assumptions:cost}. Recall that almost every point in $\X \times \Y$ is in $N$.
Being $\widetilde{c}$-optimal, $h_\infty$ cannot be improved. Hence by Lemma~\ref{lem:necc_cond}, $h(x_0,y_0)$ is either equal to $0$ or equal to $\overline{h}(x_0,y_0)$ at each point $(x_0,y_0)\in N$. In other words, $h$ is geometrically extreme.
\end{proof}

\section{Optimality implies being geometrically extreme}\label{section:implies}

The following lemma will be used in the proof of Theorem~\ref{thm:optimality_ge}.
Given two {\it not necessarily positive} marginal densities
$f, g\in L^1$ with the same total mass $\int f = \int g$, we would like to produce a joint density $h$ which is controlled by $f$ and $g$. Since $f$ and $g$ are not necessarily positive, it is possible for their total mass to be zero even when the densities themselves are not identically zero. In such a case the product $f(x)g(y)$ does not necessarily have $f$ and $g$ as its marginals. The following lemma addresses this issue.\\

Let $\phi[Z]:=\int_Z \phi(z)dz$ denote the total mass of the function $\phi$ on the set $Z$.

\begin{lem}\label{lemma:first}
Let $X, Y$ be Borel subsets of $[-\frac{1}{2},\frac{1}{2}]^d$ whose $\Leb^d$-measure is strictly positive. Let $f\in L^1(X)$, $g\in L^1(Y)$ have same total mass $m:=f[X]= g[Y] \in \mathbb{R}$. Suppose $||f||_{L^\infty(X)}, ||g||_{L^\infty(Y)} < \epsilon$. Then there exists a joint density $h\in L^1(X \times Y)$ with marginals $f$ and $g$ such that $ ||h||_{L^\infty(X\times Y)}< 3\epsilon(\frac{1}{\Leb^d[X]} + \frac{1}{\Leb^d[Y]})$.
\end{lem}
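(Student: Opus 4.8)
The plan is to exhibit $h$ explicitly as an additive rank-one correction of the rescaled tensor construction, which sidesteps the difficulty highlighted before the lemma: the naive choice $f(x)g(y)/m$ is undefined (or badly behaved) precisely when the common total mass $m$ is zero or small, whereas in the formula below $m$ enters only linearly. Set $a:=\Leb^d[X]>0$ and $b:=\Leb^d[Y]>0$ and define, on $X\times Y$,
\[
h(x,y):=\frac{f(x)}{b}+\frac{g(y)}{a}-\frac{m}{ab}.
\]
Since $X\times Y\subseteq[-\tfrac12,\tfrac12]^{2d}$ has finite measure and $h$ is bounded, $h\in L^1(X\times Y)$, so it only remains to check the marginals and the $L^\infty$ bound.

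First I would verify the marginal conditions by direct integration, using $f[X]=g[Y]=m$. Integrating in $y$ over $Y$ gives $\int_Y h(x,y)\,dy=f(x)+\tfrac{m}{a}-\tfrac{m}{a}=f(x)$ for a.e.\ $x\in X$ (the first term because $\int_Y dy=b$, the second because $\int_Y g=m$, the third because $\int_Y dy=b$), and symmetrically $\int_X h(x,y)\,dx=\tfrac{m}{b}+g(y)-\tfrac{m}{b}=g(y)$ for a.e.\ $y\in Y$. Thus $h$ has marginals $f$ and $g$.

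Next comes the $L^\infty$ estimate, which is just the triangle inequality together with an elementary bound on $|m|$. We have
\[
\|h\|_{L^\infty(X\times Y)}\le \frac{\|f\|_{L^\infty(X)}}{b}+\frac{\|g\|_{L^\infty(Y)}}{a}+\frac{|m|}{ab}.
\]
By hypothesis the first two terms are $<\epsilon/b$ and $<\epsilon/a$. For the third, $|m|=|f[X]|\le\int_X|f|\le a\,\|f\|_{L^\infty(X)}<a\epsilon$, so $|m|/(ab)<\epsilon/b$. Adding these, $\|h\|_{L^\infty(X\times Y)}<\epsilon(\tfrac1a+\tfrac2b)\le 3\epsilon\bigl(\tfrac{1}{\Leb^d[X]}+\tfrac{1}{\Leb^d[Y]}\bigr)$, which is the claimed inequality. (In fact $|m|/(ab)\le\epsilon/\max(a,b)$ gives the sharper $2\epsilon(\tfrac1a+\tfrac1b)$, but the weaker bound is all that is needed downstream.)

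I do not expect a genuine obstacle here: the entire content is the choice of $h$, after which everything is a one-line verification. If anything deserves care it is making sure the estimate is written so that the strictness ``$<\epsilon$'' of the $L^\infty$ hypotheses is preserved through the bound on $|m|$ and hence through the final inequality — but this is automatic since all three summands are strictly controlled.
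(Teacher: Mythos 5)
Your proof is correct and uses essentially the same construction as the paper: writing $f_0=1/\Leb^d[X]$, $g_0=1/\Leb^d[Y]$, the paper's choice $h=f\cdot g_0+f_0\cdot g-m\,f_0\cdot g_0$ is exactly your $\frac{f(x)}{b}+\frac{g(y)}{a}-\frac{m}{ab}$. Your $L^\infty$ estimate is slightly cleaner (direct triangle inequality rather than the paper's regrouping into $(f-mf_0)\cdot g_0+f_0\cdot(g-mg_0)+mf_0\cdot g_0$), yielding the sharper $2\epsilon(\frac1a+\frac1b)$ as you note, but both land within the stated $3\epsilon$ bound.
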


\begin{proof}
Let $f_0:=\frac{1}{\Leb^d[X]}\in L^1(X)$ and $g_0:=\frac{1}{\Leb^d[Y]} \in L^1(Y)$. Note that $f_0$ and $g_0$ have total mass $1$. We first deal with the case $m=0$. Note that $(f\cdot g_0)_X=f$,
while $(f \cdot g_0)_Y=0$.
Similarly, $(f_0 \cdot g)_X=0$, while $(f_0 \cdot g)_Y=g$.
Let $h:=f \cdot g_0 + f_0\cdot g$. Since the maps $(\cdot )_X$ and $(\cdot )_Y$ are linear, we get that $h_X=f$, and $h_Y=g$.

More generally, suppose the total mass $m=f[X]=g[Y]$ is not necessarily $0$. Let $h:=f \cdot g_0 + f_0\cdot g - m f_0\cdot g_0=(f - m f_0)\cdot g_0 + f_0\cdot (g -m g_0) + m f_0\cdot g_0$. Since the total mass of $f - m f_0$ and $g -mg_0$ is $0$, we conclude by above that $h_X=(f - m f_0) + 0 + m f_0 = f$, and $h_Y=0 + (g -m g_0)+ m g_0 = g$. For $(x,y)\in X\times Y$ the density $h$ satisfies:
\begin{eqnarray*}
|h(x,y)| &\leq& |(f - m f_0)(x)||g_0(y)|+|f_0(x)||(g - m g_0)(y)|\\
& & +|m||f_0(x)||g_0(y)|\\
&\leq& (|f(x)| + |m||f_0(x)|)|g_0(y)|\\
& & +(|g(y)| + |m||g_0(y)|)|f_0(x)|\\
& & +|m||f_0(x)||g_0(y)|\\
&\leq & \frac{1}{\Leb^d[Y]}(|f(x)|+\frac{|m|}{\Leb^d[X]})\\
& & +\frac{1}{\Leb^d[X]}(|g(y)|+\frac{|m|}{\Leb^d[Y]}) + \frac{|m|}{\Leb^d[X]\Leb^d[Y]}\\
&\leq& \frac{2\epsilon}{\Leb^d[Y]}+\frac{2\epsilon}{\Leb^d[X]}+\frac{\epsilon}{\Leb^d[X]}\\
&<& 3\epsilon(\frac{1}{\Leb^d[X]}+\frac{1}{\Leb^d[Y]}).
\end{eqnarray*}
The penultimate inequality above uses that $|m|=|\int_X f(x)dx|\leq \int_X |f(x)|dx \leq \epsilon \Leb^d[X]$, and $|m|=|\int_Y g(y)dy|\leq \int_Y |g(y)|dy \leq \epsilon \Leb^d[Y]$.
\end{proof}

\begin{thm}\label{thm:optimality_ge}
Let the cost $c(x,y)$ satisfy conditions $(C1)-(C3)$ of subsection~\ref{assumptions:cost}.
Let $0 \leq \overline{h}\in L^\infty (\X \times \Y)$ have compact support and take $0 \leq f, g \in L^1_{c}(\X \times \Y)$ such that $\Gamma(f,g)^{\overline{h}}\neq \emptyset$.
If $h\in\Gamma(f,g)^{\overline{h}}$ is optimal, i.e. $ h \in \mbox{argmin}_{k\in\Gamma(f,g)^{\overline{h}}} I_{c}(k)$, then $h$ is geometrically extreme.
\end{thm}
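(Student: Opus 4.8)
The plan is to combine Corollary~\ref{cor:geom_ext} with Proposition~\ref{propn:third} and Lemma~\ref{lem:necc_cond}. By Corollary~\ref{cor:geom_ext} it is enough to show that at \emph{every} common Lebesgue point $(x_0,y_0)\in\X\times\Y\setminus Z$ of $h$ and $\overline{h}$ the blow-up $h_\infty$ is $\widetilde{c}$-optimal in $\Gamma(f_\infty,g_\infty)^{\overline{h}_\infty}$, after making the linear change of coordinates of Proposition~\ref{propn:third} that normalizes $D^2_{xy}c(x_0,y_0)$ to the identity (so the mixed second-order Taylor term in (\ref{Taylor_simplified}) is $\widetilde{c}(x,y)=x\cdot y$). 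Write $r:=h(x_0,y_0)$ and $R:=\overline{h}(x_0,y_0)$. If $r\in\{0,R\}$ then $\Gamma(f_\infty,g_\infty)^{\overline{h}_\infty}=\{h_\infty\}$ and there is nothing to check, so the whole task reduces to deriving a contradiction from the assumption that $0<r<R$ at some such point.

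Assume then $0<r<R$. First I would fix $\delta\in(0,\min(r,R-r))$ and invoke Lemma~\ref{lem:necc_cond} to obtain a strictly cheaper competitor $h_\infty^\delta=r+\Delta h\in\Gamma(f_\infty,g_\infty)^{\overline{h}_\infty}$, where $\Delta h$ takes values in $\{-\delta,0,\delta\}$, is supported in a product set $A\times B$ with $\Leb^d[A],\Leb^d[B]>0$, has vanishing $X$- and $Y$-marginals, and $\gamma:=I_{\widetilde{c}}(h_\infty)-I_{\widetilde{c}}(h_\infty^\delta)=-I_{\widetilde{c}}(\Delta h)>0$. On the other hand, Proposition~\ref{propn:third} says $h_n$ is $\widetilde{c}_n$-optimal in $\Gamma((h_n)_X,(h_n)_Y)^{\overline{h}_n}$ along the subsequence $N_0$. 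The heart of the argument is to transplant $\Delta h$ into a competitor $k_n\in\Gamma((h_n)_X,(h_n)_Y)^{\overline{h}_n}$ for which $I_{\widetilde{c}_n}(k_n)<I_{\widetilde{c}_n}(h_n)$ for large $n\in N_0$, contradicting this optimality.

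To construct $k_n$, apply Proposition~\ref{propn:first} to $h$ and to $\overline{h}$ simultaneously (taking a common subsequence, relabeled by $n\in N_0$, with a common $\alpha_n\to0$) to get a good set $\widetilde{F}_n\subset Q$ with $\Leb^{2d}[Q\setminus\widetilde{F}_n]\le\alpha_n$ on which $|h_n-r|\le\alpha_n$ and $|\overline{h}_n-R|\le\alpha_n$, while $0\le h_n\le\overline{h}_n\le\|\overline{h}\|_{L^\infty(Q_1)}$ everywhere. Set $d_n:=\Delta h\cdot 1_{\widetilde{F}_n}$ and $k_n:=h_n+d_n+s_n$ with a correction term $s_n$ to be chosen. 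On $\widetilde{F}_n$ one has $r-\delta-\alpha_n\le h_n+d_n\le r+\delta+\alpha_n$, so for large $n$ the function $h_n+d_n$ stays inside $[0,\overline{h}_n]$ with a margin $\mu_n:=\min(r-\delta,R-r-\delta)-2\alpha_n$ bounded below away from $0$, while off $\widetilde{F}_n$ we have $d_n=0$. Because $\Delta h$ has vanishing marginals, the $X$- and $Y$-marginals of $d_n$ equal minus those of $\Delta h\cdot 1_{Q\setminus\widetilde{F}_n}$, hence are $\le\delta$ in $L^\infty$, $\le\delta\alpha_n$ in $L^1$, and of common total mass at most $\delta\alpha_n$ in absolute value. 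One then chooses $s_n$ supported inside $\widetilde{F}_n$ (where there is room $\mu_n$), with $(s_n)_X,(s_n)_Y$ equal to minus the marginals of $d_n$, with $\|s_n\|_{L^\infty(Q)}<\mu_n$ and $\|s_n\|_{L^1(Q)}\to0$, by splitting each marginal defect into an $L^\infty$-small piece (handled via Lemma~\ref{lemma:first} on the large region $\widetilde{F}_n$, whose projections contain the good sets of Proposition~\ref{propn:second}) and an $L^1$-small piece carried on a set of vanishing measure, the total masses being equalized at a cost of $o(1)$. This makes $k_n\in\Gamma((h_n)_X,(h_n)_Y)^{\overline{h}_n}$ for large $n$, and since all densities in play have $L^1(Q)$-norm uniformly bounded, equation (\ref{eqn:o(1)}) gives
\[
I_{\widetilde{c}_n}(k_n)-I_{\widetilde{c}_n}(h_n)=I_{\widetilde{c}}(d_n)+I_{\widetilde{c}}(s_n)+o(1)\longrightarrow I_{\widetilde{c}}(\Delta h)=-\gamma<0,
\]
using $\|s_n\|_{L^1(Q)}\to0$ and $|I_{\widetilde{c}}(d_n)-I_{\widetilde{c}}(\Delta h)|\le\|\widetilde{c}\|_{L^\infty(Q)}\,\delta\,\Leb^{2d}[Q\setminus\widetilde{F}_n]\to0$. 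Hence $I_{\widetilde{c}_n}(k_n)<I_{\widetilde{c}_n}(h_n)$ for large $n\in N_0$, the contradiction that finishes the proof.

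I expect the only genuine obstacle to be the construction of the marginal-correction $s_n$ under the pointwise constraint $0\le k_n\le\overline{h}_n$: the defect created by multiplying $\Delta h$ by $1_{\widetilde{F}_n}$ is small in $L^1$ but need not be small in $L^\infty$ (its mass can concentrate on the small bad set, where $\overline{h}_n$ is uncontrolled), so Lemma~\ref{lemma:first} cannot be applied in one stroke with a single small parameter. This is precisely why Propositions~\ref{propn:first}--\ref{propn:second} are stated with uniform --- not merely $L^1$ --- control on large good sets, and why Lemma~\ref{lemma:first} is built for signed marginals with an explicit $L^\infty$ estimate: one absorbs the $L^\infty$-small part of the defect via Lemma~\ref{lemma:first} on the good region, where $h_n$ sits a definite distance strictly inside $[0,\overline{h}_n]$, and the residual $L^1$-small part on a further subregion of vanishing measure, keeping $\|s_n\|_{L^\infty}$ below the shrinking-but-positive margin $\mu_n$ throughout.
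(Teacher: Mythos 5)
Your high-level strategy matches the paper's: blow up at a common Lebesgue point, assume $0<r:=h(x_0,y_0)<R:=\overline{h}(x_0,y_0)$ for contradiction, invoke Lemma~\ref{lem:necc_cond} to produce a cheaper $h_\infty^\delta$, and then transplant this improvement to contradict the $\widetilde{c}_n$-optimality of $h_n$ guaranteed by Proposition~\ref{propn:third}. (The detour through Corollary~\ref{cor:geom_ext} is unnecessary and slightly misleading: that corollary presupposes $h_\infty$ is $\widetilde{c}$-optimal, which is never established; but your actual argument bypasses it and aims directly at showing $r\in\{0,R\}$, which is fine.) Where you diverge from the paper is in how the improvement is transplanted. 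You propose to perturb $h_n$ directly, setting $k_n=h_n+\Delta h\,1_{\widetilde F_n}+s_n$, and then satisfy the constraint $0\le k_n\le\overline h_n$ on \emph{all} of $Q$. The paper instead pushes $h_\infty^\delta$ forward by $(S_n\times T_n)_\#$ where $S_n,T_n$ are optimal maps matching $f_\infty,g_\infty$ to the (normalized) marginals of $h_n$ --- which produces a competitor with exactly the right marginals on all of $Q$ --- and then invokes Lemma~\ref{lem:restriction} to compare \emph{only the restrictions} to a product good set $\overline Z_n^{good}$, where the capacity bound can be verified. Forgoing Lemma~\ref{lem:restriction}, as you do, is not a harmless simplification; it is precisely what creates the problem you yourself flag.

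The genuine gap is in the construction of $s_n$. You need $s_n$ to (i) have marginals $(\Delta h\,1_{Q\setminus\widetilde F_n})_X$ and $(\Delta h\,1_{Q\setminus\widetilde F_n})_Y$, (ii) keep $h_n+d_n+s_n$ within $[0,\overline h_n]$ pointwise a.e.\ on $Q$, and (iii) satisfy $I_{\widetilde c}(s_n)=o(1)$. For (ii), $s_n$ must be supported where a margin exists, i.e.\ inside the Egoroff good set $\widetilde F_n$, because on $Q\setminus\widetilde F_n$ the value of $h_n$ is uncontrolled (it can be $0$ or $\overline h_n$ exactly, and any nonzero $s_n$ would break the constraint). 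But $\widetilde F_n$ is an arbitrary Borel set of nearly full measure, not a product, and Lemma~\ref{lemma:first} only produces a joint density on a \emph{product} $X\times Y$; its construction $f\cdot g_0+f_0\cdot g-m\,f_0\cdot g_0$ generically has full support on $X\times Y$. There is no guarantee that $\widetilde F_n$ contains a large product set, and even if it did, the prescribed marginals need not be supported in the projection of that product. Your proposed fix --- splitting the defect into an $L^\infty$-small piece and an $L^1$-small piece carried on a vanishing-measure region --- does not address this support incompatibility, and is not carried out. Pushing it through would, in practice, force you to pass to a product good set and compare restrictions, which is exactly Lemma~\ref{lem:restriction} plus the paper's machinery (Brenier maps $S_n,T_n$, the Jacobian estimates via equation~(\ref{eqn_local_change_of_variables}), stability of optimal maps). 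As written, the proposal has an identified but unresolved hole at the crux of the argument.
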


\begin{proof}
Let $N:=\{(x,y)\in \X \times \Y \setminus Z \,|\, (x,y) \mbox{ is a common Lebesgue}$ $\mbox{point of } h \mbox{ and } \overline{h} \}$ where $Z$ be the Lebesgue negligible set of subsection~\ref{assumptions:cost}. Note that almost every point in $\X \times \Y$ is in $N$.

Fix $(x_0,y_0)\in N$ and let $h_n$ and $\overline{h}_n$ be the blow-up sequences of $h$ and $\overline{h}$ at $(x_0,y_0)$, with $h_\infty$ and $\overline{h}_\infty$ their respective limits in $L^1$. Suppose by contradiction that $0 < h(x_0,y_0) < \overline{h}(x_0,y_0)$.

Let $\overline{R}:=\|\overline{h}\|_{L^\infty(Q_1)} ,R:=\overline{h}(x_0, y_0)$, and $r:= h(x_0, y_0)$.
By Lemma~\ref{lem:necc_cond}, for $0<\delta <\mbox{min}(r,R-r)$, there exists $h_\infty^\delta \in \Gamma(f_\infty,g_\infty)^{\overline{h}_\infty}$ such that $r-\delta \leq h_\infty^\delta \leq r+\delta$ and
\begin{equation}\label{eqn:improved}
I_{\widetilde{c}}(h_\infty^\delta) < I_{\widetilde{c}}(h_\infty).
\end{equation}
Assume for now (argued below) that there exists a sequence of non-negative densities
$h^{\delta}_n\in L^1(Q)$ ($n\in N_0$, where the index set $N_0$ is the set of natural
numbers defined in Propositions~\ref{propn:first}--\ref{propn:second}),
with the following properties for large enough $n$:

\begin{itemize}
\item[(P1)]{$h^{\delta}_n \leq \overline{h}_n$ on $\widetilde{Z}_n^{good}$, where $\widetilde{Z}_n^{good} \subset Q$ is a rectangular set satisfying $\Leb^{2d}[\widetilde{Z}_n^{good}] \rightarrow 1$ as $n \rightarrow \infty$,}
\item[(P2)]{$h^{\delta}_n|_{\widetilde{Z}_n^{good}}$ and $h_n|_{\widetilde{Z}_n^{good}}$ have the same marginals,}
\item[(P3)]{$h^{\delta}_n$ is bounded by a constant $\frac{(\overline{R}+1)^3}{r^2}$ independent of $n$ on $Q\setminus\widetilde{Z}_n^{good}$,}
\item[(P4)]{$I_{\widetilde{c}_n}(h^{\delta}_n)\longrightarrow I_{\widetilde{c}}(h^{\delta}_\infty)$ as $n\rightarrow \infty$.}
\end{itemize}

By Lemma~\ref{lem:restriction} constrained optimality is inherited by restriction to rectangular sets. Hence since, by Proposition~\ref{propn:third}, $h_n$ is $\widetilde{c}_n$-optimal among all densities which share its marginals and which are dominated by $\overline{h}_n$, its restriction, $h_n|_{\widetilde{Z}_n^{good}}$, remains $\widetilde{c}_n$-optimal among all densities which share its marginals and which are dominated by $\overline{h}_n$. In particular, by (P1)--(P2), $I_{\widetilde{c}_n}(h^{\delta}_n|_{\widetilde{Z}_n^{good}}) \geq I_{\widetilde{c}_n}(h_n|_{\widetilde{Z}_n^{good}})$. Hence,
\begin{eqnarray}\label{eqn:asymptote}
I_{\widetilde{c}_n}(h^{\delta}_n)&=&I_{\widetilde{c}_n}(h^{\delta}_n|_{\widetilde{Z}_n^{good}})+I_{\widetilde{c}_n}(h^{\delta}_n|_{Q\setminus\widetilde{Z}_n^{good}})\nonumber\\
&\geq& I_{\widetilde{c}_n}(h_n|_{\widetilde{Z}_n^{good}})+I_{\widetilde{c}_n}(h^{\delta}_n|_{Q\setminus\widetilde{Z}_n^{good}})\nonumber\\
&=& I_{\widetilde{c}}(h_n|_{\widetilde{Z}_n^{good}})+I_{\widetilde{c}}(h^{\delta}_n|_{Q\setminus\widetilde{Z}_n^{good}})+o(1)\\
&=& I_{\widetilde{c}}(h_n)-I_{\widetilde{c}}(h_n|_{Q\setminus\widetilde{Z}_n^{good}}) +I_{\widetilde{c}}(h^{\delta}_n|_{Q\setminus\widetilde{Z}_n^{good}})+o(1)\nonumber,
\end{eqnarray}
where we have used equation (\ref{eqn:o(1)}) to go from the second line to the third.

Note that since $|x \cdot y|\leq d/4$ on $Q$, for any $k\in L^1(Q)$: $|I_{\widetilde{c}}(k)| \leq \int_Q|x \cdot y|k(x,y)\leq \frac{d}{4} k[Q]$. Hence, rearranging equation (\ref{eqn:asymptote}) we get
\begin{eqnarray*}
I_{\widetilde{c}}(h_n) - I_{\widetilde{c}_n}(h^{\delta}_n) + o(1) &\leq & I_{\widetilde{c}}(h_n|_{Q\setminus\widetilde{Z}_n^{good}}) - I_{\widetilde{c}}(h^{\delta}_n|_{Q\setminus\widetilde{Z}_n^{good}})\\
&\leq & |I_{\widetilde{c}}(h_n|_{Q\setminus\widetilde{Z}_n^{good}})| + |I_{\widetilde{c}}(h^{\delta}_n|_{Q\setminus\widetilde{Z}_n^{good}})|\\
&\leq & \frac{d}{4}(h_n[{Q\setminus\widetilde{Z}_n^{good}}]+ h^{\delta}_n[{Q\setminus\widetilde{Z}_n^{good}}])\\
&\leq & \frac{d}{4}(\overline{R} + \frac{(\overline{R}+1)^3}{r^2})\Leb^{2d}[{Q\setminus\widetilde{Z}_n^{good}}],
\end{eqnarray*}
where the last inequality above follows from (c) of Proposition~\ref{propn:first} and property (P3).
Letting $n \rightarrow \infty$ above, and using properties (P1) and (P4) as well as the continuity of the linear functional $I_{\widetilde{c}}(\cdot)$, we get that $I_{\widetilde{c}}(h^{\delta}_\infty) \geq I_{\widetilde{c}}(h_\infty)$, contradicting equation (\ref{eqn:improved}). Hence for every $(x_0,y_0)\in N$ either $0 = h(x_0,y_0)$ or $ h(x_0,y_0) = \overline{h}(x_0,y_0)$. In other words, $h$ is geometrically extreme.\\

In the rest of this proof we demonstrate the existence of a sequence $h^{\delta}_n$ with properties (P1)--(P4). We do this in several steps. For ease of reference, we record the following chain of inequalities when $0<\delta <\mbox{min}(r,R-r)$:
$$0  < \frac{r-\delta}{2} < r-\delta \leq h_\infty^\delta \leq r+\delta < \frac{R+r+\delta}{2} < \frac{3R+r+\delta}{4} < R.$$

Recall we are supposing by contradiction that $0 < h(x_0,y_0) < \overline{h}(x_0,y_0)$.

Step (1): Construction of densities $\widehat{h}^\delta_n$. Let $f_n:=(h_n)_X$, $g_n:=(h_n)_Y$, $f_\infty:=(h_\infty)_X$, and $g_\infty:=(h_\infty)_Y$. Note that $f_n$ and $g_n$ have same total mass, and that $f_\infty$ and $g_\infty$ have same total mass. Since $f_n$ and $f_\infty$ may not have the same total mass, we will work with normalized copies $f'_n:=\frac{h_\infty [Q]}{h_n [Q]}f_n$ and $g'_n:=\frac{h_\infty [Q]}{h_n [Q]}g_n$. It follows from remark~\ref{cor:blow-up} that $h_n [Q] \rightarrow h_\infty [Q] > 0$, hence $f'_n$ and $g'_n$ are well-defined, at least for large enough $n$ which is all we will use. Note that for large enough $n$, $f'_n, g'_n, f_\infty$ and $g_\infty$ all have the same total mass. Since $\overline{h}$ is bounded and of compact support, so is $h$, hence so are $f_n, g_n, f'_n, g'_n \in L^1([-\frac{1}{2},\frac{1}{2}]^d)$, as well as $f_\infty, g_\infty\in L^1([-\frac{1}{2},\frac{1}{2}]^d)$.

Let $S_n:[-\frac{1}{2},\frac{1}{2}]^d \rightarrow [-\frac{1}{2},\frac{1}{2}]^d$ be the unique measure preserving map between $f_\infty$ and $f'_n$ (see~\cite{GM95}) minimizing the cost $\widetilde{c}(x,y)= x \cdot y$. Similarly let  $T_n:[-\frac{1}{2},\frac{1}{2}]^d \rightarrow [-\frac{1}{2},\frac{1}{2}]^d$ be the unique measure preserving map between $g_\infty$ and $g'_n$ minimizing $\widetilde{c}(x,y)$ . Note that $S_n$ and $T_n$ are essentially {\it bijections} (see~\cite{GM}).

Recall (e.g.~\cite{GM95}) that a measure preserving map $s$ between two $L^1$-functions $f$ and $g$ is a Borel map which satisfies the change of variables formula
\begin{equation}\label{eqn_change_of_variables}
\int_{\Y} h(y)g(y)dy = \int_{\X} h(s(x))f(x)dx,
\end{equation}
for all $h$ continuous on $\Y$. Given $f\in L^1(\X)$ and a Borel map $s:\X \rightarrow \Y$, there is a unique function $g \in L^1(\Y)$ satisfying equation (\ref{eqn_change_of_variables}). Call this $g$ the {\it push forward} of $f$ by $s$, denoted $s_\#f$. Note that $s$ is measure preserving between $f$ and $s_\#f$. Whenever $s$ is a diffeomorphism, equation (\ref{eqn_change_of_variables}) implies
\begin{equation}\label{eqn_local_change_of_variables}
g(s(x))| det Ds(x) |=f(x).
\end{equation}
From~\cite{M97}, if $s$ fails to be a diffeomorphism but is given by
the gradient of a convex function,
equation (\ref{eqn_local_change_of_variables}) continues to hold $f$-a.e.

Recall ~\cite{ B91, M95} that for the cost $\widetilde{c}(x,y)= x \cdot y$, the optimal maps $S_n(x)$ and $T_n(y)$ have the form $x \mapsto \nabla \psi (x)$ and $y \mapsto \nabla \phi (y)$, where $\psi$ and $\phi$ are convex functions. By Alexandrov's Theorem a convex function has second order derivatives almost everywhere. Hence it makes sense to talk about the derivatives $DS_n$ and $DT_n$ almost everywhere. \\

We note that $(S_n \times T_n)_\#k\in\Gamma(f'_n,g'_n)$ for any $k\in \Gamma(f_\infty,g_\infty)$. It is straightforward to see this: we check that $((S_n \times T_n)_\# k)_X = f'_n$ (checking $((S_n \times T_n)_\# k)_Y = g'_n$ is similar). For any $h\in C([-\frac{1}{2},\frac{1}{2}]^d)$:
\begin{eqnarray*}\label{eqn:rightmarginals}
& & \int h(x) ((S_n\times T_n)_\# k)_X (x) = \int \int h(x) ((S_n\times T_n)_\# k)(x,y) \\
&=& \int \int h(S_n(x)) k(x,y) = \int h(S_n(x)) f_\infty(x) = \int h(x) f'_n(x).
\end{eqnarray*}

Let $\widehat{h}_n^{\delta}:=\frac{h_n [Q]}{h_\infty [Q]}(S_n \times T_n)_{\#} h^{\delta}_\infty$. By the above, $\widehat{h}_n^{\delta}\in\Gamma(f_n,g_n)$ for all $n$, that is $\widehat{h}_n^{\delta}$ has the same marginals as $h_n$.\\

Step (2): We next show that $\widehat{h}_n^{\delta}$, where $n\in N_0$, satisfies property (P1).

Recall the notation of Proposition~\ref{propn:second}.
Denoting $\overline{X}_n^{good}:=S^{-1}_n(\widetilde{X}_n^{good})$ (respectively $\overline{Y}_n^{good}:=T^{-1}_n(\widetilde{Y}_n^{good})$), we have that $\Leb^{d}[\overline{X}_n^{good}] \rightarrow 1$ (respectively that $\Leb^{d}[\overline{Y}_n^{good}] \rightarrow 1)$.

By (b) of Proposition~\ref{propn:second}, $f_n \rightarrow h(x_0,y_0)=r$ `uniformly' on $\widetilde{X}_n^{good}$. By (c) of Proposition~\ref{propn:second}, $f_n\leq \|\overline{h}\|_{L^\infty(Q_1)}=\overline{R}$ on $[-\frac{1}{2}, \frac{1}{2}]^d$.
Since $S_n$ is a convex gradient, equation (\ref{eqn_local_change_of_variables}) applies to give,
$$\frac{1}{|det(DS_n)(x)|}=\frac{f'_n(S_n(x))}{f_\infty(x)}=\frac{h_\infty [Q]}{h_n [Q]} \frac{f_n(S_n(x))}{r}\longrightarrow 1$$ `uniformly' on $\overline{X}_n^{good}$;
while on $[-\frac{1}{2}, \frac{1}{2}]^d$,  $\frac{1}{|det(DS_n)|} \leq \frac{\overline{R}}{r}\frac{h_\infty [Q]}{h_n [Q]} < \frac{\overline{R}+1}{r}$ for large enough $n$.

Similarly, by Proposition~\ref{propn:second} and equation (\ref{eqn_local_change_of_variables}),
$$\frac{1}{|det(DT_n)(y)|}=\frac{g'_n(T_n(y))}{g_\infty(y)}=\frac{h_\infty [Q]}{h_n [Q]}\frac{g_n(T_n(y))}{r}\longrightarrow 1$$ `uniformly' on $\overline{Y}_n^{good}$;
while on $[-\frac{1}{2}, \frac{1}{2}]^d$,  $\frac{1}{|det(DT_n)|} \leq \frac{\overline{R}}{r}\frac{h_\infty [Q]}{h_n [Q]}< \frac{\overline{R}+1}{r}$ for large enough $n$.

Hence,
$$\frac{1}{|det(D(S_n \times T_n))(x,y)|}
= \frac{1}{|det(DS_n)(x)||det(DT_n)(y)|} \longrightarrow 1
$$
`uniformly' on $\overline{Z}_n^{good}:=\overline{X}_n^{good} \times \overline{Y}_n^{good}$; while on $Q$, $\frac{1}{|det(D(S_n \times T_n))|} < (\frac{\overline{R}+1}{r})^2$ for large enough $n$.

Note that the optimal map from $(f_\infty, g_\infty)$ to $(f'_n, g'_n)$ is given by $(x,y) \rightarrow (S_n(x), T_n(y))= \nabla (\psi(x) + \phi (y))$, a gradient of a convex function. So equation (\ref{eqn_local_change_of_variables}) applies to give, $\widehat{h}_n^\delta((S_n \times T_n)(x,y))\frac{h_\infty [Q]}{h_n [Q]}=\frac{h_\infty^\delta(x,y)}{|det(D(S_n \times T_n))(x,y)|}$. It follows that for large enough $n$,
\begin{equation}\label{eqn:perturbed_gamma}
0 < \frac{r -\delta}{2} < \widehat{h}_n^\delta((S_n \times T_n)(x,y)) < \frac{R+r+\delta}{2} < R,
\end{equation}
for almost every $(x,y)\in \overline{Z}_n^{good}$; while on $Q$ and for large enough $n$,
\begin{equation}\label{eqn:widehat_gamma_n_delta}
\widehat{h}_n^\delta < (\frac{\overline{R}+1}{r})^2 R \leq \frac{(\overline{R}+1)^3}{r^2}.
\end{equation}

Recall that by (b) of Corollary~\ref{cor:blow-up}, $\overline{h}_n \rightarrow R$ uniformly on $Q$. It follows, using equation (\ref{eqn:perturbed_gamma}), that for large enough $n$, $\widehat{h}_n^\delta|_{\widetilde{Z}_n^{good}} \leq \overline{h}_n|_{\widetilde{Z}_n^{good}}$.\\

Step (3): Note that even though $\widehat{h}_n^\delta$ and $h_n$ have the same marginals on $Q$, the marginals of $\widehat{h}_n^\delta|_{\widetilde{Z}_n^{good}}$ and $h_n|_{\widetilde{Z}_n^{good}}$ may not be the same. In step (4) $\widehat{h}_n^{\delta}$ will be perturbed by a density $\widetilde{h}_n^{\delta}$ so that $(\widehat{h}_n^{\delta}+\widetilde{h}_n^{\delta})|_{\widetilde{Z}_n^{good}}$ and $h_n|_{\widetilde{Z}_n^{good}}$ have the same marginals. The perturbation will be chosen to preserve the capacity bound on $\widetilde{Z}_n^{good}$: $(\widehat{h}_n^{\delta}+\widetilde{h}_n^{\delta})|_{\widetilde{Z}_n^{good}} \leq \overline{h}_n|_{\widetilde{Z}_n^{good}}$. In this step we construct $\widetilde{h}_n^{\delta}$.

Let $\widetilde{f}_n^{\delta}=((h_n-\widehat{h}_n^{\delta})|_{\widetilde{Z}_n^{good}})_X \in L^1(\widetilde{X}_n^{good})$ and $\widetilde{g}_n^{\delta}=((h_n-\widehat{h}_n^{\delta})|_{\widetilde{Z}_n^{good}})|_Y\in L^1(\widetilde{Y}_n^{good})$ be the marginals of $(h_n-\widehat{h}_n^{\delta})|_{\widetilde{Z}_n^{good}}$.
Since $h_n$ and $\widehat{h}_n^{\delta}$ have the same marginals on $Q$, $\int_{\widetilde{Y}_n^{good}}(h_n- \widehat{h}_n^{\delta})(x,y)dy + \int_{\widetilde{Y}_n^{bad}} (h_n- \widehat{h}_n^{\delta})(x,y)dy = 0$.

Hence, by (c) of Proposition~\ref{propn:first} and equation (\ref{eqn:widehat_gamma_n_delta}),
\begin{eqnarray*}
|\widetilde{f}_n^{\delta}(x)|
& = & |\int_{\widetilde{Y}_n^{good}} (h_n-\widehat{h}_n^{\delta})(x,y)dy|
= |\int_{\widetilde{Y}_n^{bad}} (h_n-\widehat{h}_n^{\delta})(x,y)dy| \\
& \leq & \int_{\widetilde{Y}_n^{bad}} |h_n-\widehat{h}_n^{\delta}|(x,y)dy
 \leq \int_{\widetilde{Y}_n^{bad}}(|h_n|+|\widehat{h}_n^{\delta}|)(x,y)dy\\
& \leq & (\overline{R} + \frac{(\overline{R}+1)^3}{r^2})  \Leb^{d} [ \widetilde{Y}_n^{bad} ].
\end{eqnarray*}
Similarly $|\widetilde{g}_n^{\delta}(y)|\leq (\overline{R} + \frac{(\overline{R}+1)^3}{r^2}) \Leb^{d} [ \widetilde{X}_n^{bad} ]$.
It follows from Lemma~\ref{lemma:first}, that there exist a joint density $\widetilde{h}_n^{\delta}\in \mathcal{M}(\widetilde{X}_n^{good}\times \widetilde{Y}_n^{good})$  with marginals
$\widetilde{f}_n^{\delta}$ and $\widetilde{g}_n^{\delta}$
such that
\begin{equation}\label{eqn:bound_on_perturbation}
|\widetilde{h}_n^{\delta}(x,y)|< 3(\overline{R} + \frac{(\overline{R}+1)^3}{r^2})(\Leb^{d} [ \widetilde{X}_n^{bad} ]+\Leb^{d} [ \widetilde{Y}_n^{bad} ])(\frac{1}{\Leb^{d} [ \widetilde{X}_n^{good} ]}+\frac{1}{\Leb^{d} [ \widetilde{Y}_n^{good} ]}).
\end{equation}
Since the right hand side of equation (\ref{eqn:bound_on_perturbation}) tends to $0$ as $n \rightarrow \infty$, by choosing $n$ large enough, we can make sure the densities $\widetilde{h}_n^{\delta}$ are as close to $0$ as we like. In particular, for large enough $n$,
\begin{equation}\label{eqn:density_of_perturbation}
|(\widetilde{h}_n^{\delta})(x,y)| < \mbox{min}\{ \frac{R-(r+\delta)}{4}, \frac{r-\delta}{4} \}. \end{equation}

Step (4): Establishing properties (P1)--(P4) for the densities $h^\delta_n$.

Let $h_n^{\delta}:=\widehat{h}_n^{\delta}+\widetilde{h}_n^{\delta}$.
Note that although $\widetilde{h}_n^{\delta}$ could be negative, ${h}_n^{\delta}$ is non-negative: from equations (\ref{eqn:perturbed_gamma}) and (\ref{eqn:density_of_perturbation}) we have that $h_n^{\delta}=\widehat{h}_n^{\delta}+\widetilde{h}_n^{\delta} > \frac{r-\delta}{2} - \frac{r-\delta}{4}=\frac{r-\delta}{4} >0$.
Since the marginals of $\widetilde{h}_n^{\delta}$ are $\widetilde{f}_n^{\delta}=((h_n-\widehat{h}_n^{\delta})|_{\widetilde{Z}_n^{good}})|_X$ and $\widetilde{g}_n^{\delta}=((h_n-\widehat{h}_n^{\delta})|_{\widetilde{Z}_n^{good}})|_Y$, $h_n^{\delta}$ and $h_n$ have the same marginals on $\widetilde{Z}_n^{good}$. This establishes property (P2).\\

By (b) of Remark \ref{cor:blow-up}, $\overline{h}_n\rightarrow R$ uniformly on $Q$. Hence, since $R > \frac{3R+r+\delta}{4}$, for large enough $n$: $\overline{h}_n > \frac{3R+r+\delta}{4}$ on $Q$. On the other hand by equations (\ref{eqn:perturbed_gamma}) and  (\ref{eqn:density_of_perturbation}), for large enough $n$, $h_n^{\delta}=\widehat{h}_n^{\delta}+\widetilde{h}_n^{\delta} < \frac{R+r+\delta}{2}+ \frac{R-(r+\delta)}{4}=\frac{3R+r+\delta}{4} < \overline{h}_n$
on $\widetilde{Z}_n^{good}$. This establishes property (P1).\\

Since the perturbation $\widetilde{h}_n^{\delta}$ is supported on $\widetilde{Z}_n^{good}$, $h_n^{\delta}=\widehat{h}_n^{\delta}$ on $Q\setminus\widetilde{Z}_n^{good}$. Hence, using equation (\ref{eqn:widehat_gamma_n_delta}),
$h_n^{\delta}=\widehat{h}_n^{\delta} < \frac{(\overline{R}+1)^3}{r^2}$ on $Q\setminus\widetilde{Z}_n^{good}$. This established property (P3).\\

To establish property (P4) we need to show that $I_{\widetilde{c}_n}(h_n^{\delta})=I_{\widetilde{c}_n}(\widehat{h}_n^{\delta}+\widetilde{h}_n^{\delta}) \rightarrow I_{\widetilde{c}}(h_{\infty}^{\delta})$ as $n \rightarrow \infty$. Note that by equation (\ref{eqn:bound_on_perturbation}) $\widetilde{h}_n^{\delta}\rightarrow 0$ uniformly on $Q$. Hence by equation (\ref{eqn:o(1)}) and Lebesgue's Dominated Convergence Theorem, $I_{\widetilde{c}_n}(\widetilde{h}_n^{\delta})=I_{\widetilde{c}}(\widetilde{h}_n^{\delta})+o(1)\rightarrow 0$ as $n\rightarrow \infty$. So we need only show $I_{\widetilde{c}_n}(\widehat{h}_n^{\delta}) \rightarrow I_{\widetilde{c}}(h_{\infty}^{\delta})$ as $n \rightarrow \infty$.

Stability of the transport map~\cite[Corollary 5.23]{Villani2} implies that $S_n, T_n$ converge in measure to $-id|_{[-\frac{1}{2},\frac{1}{2}]^d}$, minus the identity map on $[-\frac{1}{2},\frac{1}{2}]^d$. By extracting a subsequence if necessary we can assume~\cite[Proposition 4.17]{Royden} that  $S_n, T_n$ converge to $-id|_{[-\frac{1}{2},\frac{1}{2}]^d}$ almost everywhere on $[-\frac{1}{2},\frac{1}{2}]^d$. Since $\widetilde{c}(\cdot, \cdot)$ is continuous, it follows that $\widetilde{c}(S_n(x),T_n(y))$ converges to $\widetilde{c}(x,y)$ almost everywhere on $Q=[-\frac{1}{2},\frac{1}{2}]^d \times [-\frac{1}{2},\frac{1}{2}]^d$.

Note that $|\widetilde{c}(S_n(x),T_n(y))h_{\infty}^{\delta}(x,y)|$ is bounded above on $Q$, e.g. by $\|\widetilde{c}\|_{L^\infty(Q)}R$. Hence, since $\Leb^{2d}[Q]<\infty$, we can apply the Dominated Convergence Theorem to concluded that as $n \rightarrow \infty$:
\begin{eqnarray*}
I_{\widetilde{c}_n}(\widehat{h}_n^{\delta}) &=& \int_Q \widetilde{c}(x,y) \widehat{h}_{n}^{\delta}(x,y)+ o(1)\\
&=& \frac{h_n [Q]}{h_\infty [Q]} \int_Q \widetilde{c}(S_n(x),T_n(y)) h_{\infty}^{\delta}(x,y) + o(1)\\
&\rightarrow & \int_Q \widetilde{c}(x,y) h_{\infty}^{\delta}(x,y)
= I_{\widetilde{c}}(h_{\infty}^{\delta}).
\end{eqnarray*}

This established property (P4) and completes the proof.
\end{proof}

\section{Optimal solution to the constrained problem is unique}\label{section:uniqueness}

We now show that, given a capacity constraint $\overline{h}$, the corresponding constrained optimization problem has a unique solution. In the unconstrained optimization setup, a characteristic property of optimal solutions is {\it c-cyclical monotonicity}. This property can be used to prove a solution is unique~\cite[Theorem 3.7]{GM}. The property of optimal solutions in the constrained setup that is used here to prove uniqueness is that of being geometrically extreme (see Definition~\ref{defn:geometrically_extreme}). Note that in the unconstrained case, c-cyclical monotonicity is in fact necessary and sufficient for optimality,
whereas in the constrained case geometric extremality is merely necessary.

\begin{thm}\label{thm:uniqueness} (Uniqueness)
Let the cost $c(x,y)$ satisfy conditions $(C1)-(C3)$ of subsection~\ref{assumptions:cost}. Let the capacity bound $0\leq \overline{h} \in L^\infty (\X \times \Y)$ have compact support. Take $0 \leq f, g \in L^1_{c}( \X \times \Y)$ such that $\Gamma(f, g)^{\overline{h}}\neq \emptyset$. Then an optimal solution to the constrained problem (\ref{eqn: constraind_optimal_cost}) is unique (as an element of $L^1( \X \times \Y)$).
\end{thm}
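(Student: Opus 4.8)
The plan is to combine Theorem~\ref{thm:optimality_ge} with the elementary fact, noted in the introduction, that a density in $\Gamma(f, g)^{\overline{h}}$ is an extreme point of this convex set if and only if it is geometrically extreme; only the implication ``geometrically extreme $\Rightarrow$ extreme point'' will actually be used. Granting this, uniqueness follows quickly. Suppose $h_0,h_1\in\Gamma(f, g)^{\overline{h}}$ are both minimizers of $I_c$ and set $h_t:=(1-t)h_0+th_1$ for $t\in[0,1]$. Convexity of $\Gamma(f, g)^{\overline{h}}$ gives $h_t\in\Gamma(f, g)^{\overline{h}}$ (it has marginals $f,g$ and satisfies $0\le h_t\le\overline{h}$), and linearity of $I_c$ gives $I_c(h_t)=(1-t)I_c(h_0)+tI_c(h_1)$, which equals the minimal value; hence every $h_t$, in particular $h_{1/2}$, is again a minimizer. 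By Theorem~\ref{thm:optimality_ge}, $h_{1/2}$ is geometrically extreme, hence an extreme point of $\Gamma(f, g)^{\overline{h}}$. But $h_{1/2}=\frac12 h_0+\frac12 h_1$ with $h_0,h_1\in\Gamma(f, g)^{\overline{h}}$, so being an extreme point forces $h_0=h_1$ as elements of $L^1(\X\times\Y)$.

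It remains to justify the implication used above. Let $h=\overline{h}\,1_W$ for some $\Leb^{2d}$-measurable $W$, and suppose $h=(1-t)k_0+tk_1$ with $k_0,k_1\in\Gamma(f, g)^{\overline{h}}$ and $t\in(0,1)$. On the complement of $W$ we have $(1-t)k_0+tk_1=h=0$ with $k_0,k_1\ge0$, so $k_0=k_1=0$ almost everywhere there. On $W\cap\{\overline{h}>0\}$ we have $(1-t)k_0+tk_1=\overline{h}$ with $0\le k_0,k_1\le\overline{h}$; writing $(1-t)(\overline{h}-k_0)+t(\overline{h}-k_1)=0$ with both terms non-negative and $t\in(0,1)$ forces $k_0=k_1=\overline{h}$ almost everywhere there. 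Finally $k_0=k_1=0=h$ almost everywhere on $\{\overline{h}=0\}$. Hence $k_0=k_1=h$ almost everywhere, so $h$ is an extreme point of $\Gamma(f, g)^{\overline{h}}$.

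There is essentially no obstacle here beyond bookkeeping: the substantive analytic content --- blow-up, inheritance of optimality by blow-up sequences, and the perturbation argument ruling out $0<h<\overline{h}$ on a positive-measure set --- has already been packaged into Theorem~\ref{thm:optimality_ge}. The only points needing mild care are that $h_t$ genuinely lies in $\Gamma(f, g)^{\overline{h}}$ (immediate from its convexity) and that all the final identifications are to be read $\Leb^{2d}$-almost everywhere, so that ``unique'' correctly means uniqueness of the $L^1$-equivalence class. When $c$ is moreover continuous, combining this with Theorem~\ref{thm:existence} upgrades ``at most one solution'' to ``exactly one solution''.
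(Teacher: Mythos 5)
Your proof is correct and follows essentially the same strategy as the paper: average two optimizers, apply Theorem~\ref{thm:optimality_ge} to see the average is geometrically extreme, and conclude the two must coincide. The paper inlines the ``geometrically extreme $\Rightarrow$ extreme point'' step as a symmetric-difference argument on the sets $W_1$, $W_2$ rather than isolating it as a lemma, but this is a cosmetic difference only.
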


\begin{proof}
Suppose $h_1, h_2$ are two optimal plans: $h_1, h_2 \in\underset{k\in\Gamma(f,g)^{\overline{h}}}{\mbox{argmin}} I_c(k)$. We show $h_1 = h_2$ almost everywhere.
Since $\Gamma(f, g)^{\overline{h}}$ is convex, $\frac{1}{2}h_1+\frac{1}{2}h_2 \in \Gamma(f, g)^{\overline{h}}$. Since $I_c(\cdot)$ is linear, the plan $\frac{1}{2}h_1+\frac{1}{2}h_2$ is also optimal.

Hence, by Theorem~\ref{thm:optimality_ge}, $h_1, h_2, \frac{1}{2}h_1+\frac{1}{2}h_2$ are all geometrically extreme. In particular,
$h_i= \overline{h} 1_{W_i}$ almost everywhere on $\X \times \Y$ for $i=1, 2$. Let $\Delta:=(W_1 \setminus W_2) \cup (W_2 \setminus W_1)$ be the symmetric difference of the sets $W_1$ and $W_2$. Either $\Leb^{2d}(\Delta)=0$, in which case $h_1 = h_2$ almost everywhere, or else for almost every $(x,y)\in \Delta$, $0 < (\frac{1}{2}h_1+\frac{1}{2}h_2)(x,y) < \overline{h}(x,y)$, contradicting Theorem~\ref{thm:optimality_ge}.
\end{proof}

\section{Appendix: Duality and Examples}

In this appendix we sketch how the analog of Kantorovich duality \cite{Kantorovich42}
would look for the constrained problem,  following the minimax heuristics
in \cite{AmbrosioGigli11p} \cite{MG}.
One of the virtues of such a duality is that it
makes it easy to check whether a conjectured optimizer is actually optimal.
Defering the elaboration of a full duality theory to a future manuscript~\cite{KM12},
below we develop just enough theory to confirm the claims made in example~\ref{example}.

Suppose $f$ and $g$ have total mass $1$ on $\X$ and recall the Duality Theorem from linear programming (e.g.~\cite{Villani1}). In the unconstrained context the primal problem is (\ref{unconstrained: primal}) and the dual problem is
\begin{equation}
\underset{(u,v)\in Lip_c}{\mbox{sup}} -\int_\X u(x)f(x)dx-\int_\Y v(y)g(y)dy,
\end{equation}
where
$Lip_c:=\{(u,v)\in L^1(\X)\times L^1(\Y)\,|\, c(x,y)+u(x)+v(y) \geq 0 \mbox{ for all } (x,y)\in\X\times \Y\}$. We now formulate a dual problem in the constrained context.
For the primal problem (\ref{eqn: constraind_optimal_cost}) we consider the following dual problem
\begin{equation}\label{unconstrained:dual}
\underset{(u,v,w)\in \overline{Lip}_c}{\mbox{sup}} -\int_\X u(x)f(x)dx-
\int_\Y v(y)g(y)dy+\int_{\X \times \Y} w(x,y) \overline{h}(x,y)dxdy,
\end{equation}
where
$\overline{Lip}_c:=\{(u,v,w)\in L^1(\X)\times L^1(\Y)\times L^1(\X \times \Y)\,|\, c(x,y)+u(x)+v(y)-w(x,y) \geq 0
\mbox{ and } w(x,y) \leq 0 \mbox{ for all } (x,y)\in \X\times \Y\}$.
It follows from the definition of $(u,v,w)\in \overline{Lip}_c$, by integrating $c(x,y) \geq -u(x)-v(y)+w(x,y) $ against $h\in\Gamma(f,g)^{\overline{h}}$, that
\begin{eqnarray*}
& &\int_{\X\times\Y} c(x,y)h (x,y) \geq \int_{\X\times\Y} \{-u(x)-v(y)+w(x,y)\} h (x,y) \geq \\ & & -\int_\X u(x)f(x)-\int_\Y v(y)g(y)+\int_{\X\times\Y} w(x,y)\overline{h}(x,y).
\end{eqnarray*}
Hence when
\begin{eqnarray}\label{eqn:duality}
\int_{\X \times \Y}c h = -\int_\X u f-\int_\Y v g+\int_{\X \times \Y} w \overline{h}
\end{eqnarray}
we conclude that $h\in\Gamma(f,g)^{\overline{h}}$ is a minimizer of (\ref{eqn: constraind_optimal_cost}) and $(u,v,w)\in \overline{Lip}_c$ a maximizer of (\ref{unconstrained:dual}).\\

We now discuss example~\ref{example} where
$c(x,y)=\frac{1}{2}|x-y|^2$,
$f=g=1|_{[-\frac{1}{2},\frac{1}{2}]}$, and
$\overline{h}=2 \cdot 1|_{[-\frac{1}{2},\frac{1}{2}]^2}$ (figure 1B).\\

Let $u(x):=-\frac{1}{2}x^2$ and $v(y):=-\frac{1}{2}y^2$.
Let $S:=\{(x,y)\in \mathbb{R}^2 \, | \, c(x,y)+u(x)+v(y) \leq 0 \}
=\{(x,y)\in \mathbb{R}^2 \, | \, xy \geq 0 \}$. Note that
$S \cap [-\frac{1}{2},\frac{1}{2}]^2
=[-\frac{1}{2},0] \times [-\frac{1}{2},0] \cup [0, \frac{1}{2}] \times [0, \frac{1}{2}]$.
Now let
$h := \overline{h}|_{S\cap [-\frac{1}{2},\frac{1}{2}]^2} \in \Gamma(f,g)^{\overline{h}}$
(see figure 1A) and let
\begin{equation*}
w(x,y):= \left\{
\begin{array}{cc}
c(x,y)+u(x)+v(y) & \mbox{ on } S\\
0 & \mbox{ on } \mathbb{R}^2 \setminus S.
\end{array}
\right.
\end{equation*}

Since $w(x,y)\leq 0$ on $\mathbb{R}^2$, $c(x,y)+u(x)+v(y)-w(x,y)$ is $=0$ on $S$, and is $>0$ on $\mathbb{R}^2 \setminus S$, $(u,v,w)\in \overline{Lip}_c$. Integrating $w$ against $h$ we get:
\begin{eqnarray*}
& &\int_{\mathbb{R}\times\mathbb{R}}c(x,y)h(x,y) + \int_\mathbb{R} u(x)f(x) + \int_\mathbb{R} v(y)g(y)\\
&=&\int_{\mathbb{R}\times\mathbb{R}}\{c(x,y)+u(x)+v(y)\}h(x,y) = \int_{S\cap [-\frac{1}{2},\frac{1}{2}]^2} w(x,y) h(x,y)\\
&=& \int_{S\cap [-\frac{1}{2},\frac{1}{2}]^2} w(x,y) \overline{h}(x,y)
= \int_{\mathbb{R}\times\mathbb{R}} w(x,y) \overline{h}(x,y).\\
\end{eqnarray*}
That is, the given $h, u, v,$ and $w$ satisfy equation (\ref{eqn:duality}). Hence $h$ minimizes the primal problem, and so is optimal, while $(u,v,w)$ maximizes the dual problem.

\end{document}